\tikzstyle{arrow}=[draw=black,arrows=-latex]
\newtheorem{theorem}{Theorem}[section]
\newtheorem{lemma}[theorem]{Lemma}
\theoremstyle{definition}
\theoremstyle{remark}
\newcounter{smalllist}
\DeclareMathOperator*{\sgn}{sgn}
\numberwithin{equation}{section}
\newcommand{\lb}{\label}
\newcommand{\supp}{\text{\rm{supp}}}
\newcommand{\beq}{\begin{equation}}
\newcommand{\eeq}{\end{equation}}
\newcommand{\bal}{\begin{align}}
\newcommand{\eal}{\end{align}}
\newcommand{\bals}{\begin{align*}}
\newcommand{\eals}{\end{align*}}
\newcommand{\bbR}{{\mathbb{R}}}
\newcommand{\bbD}{{\mathbb{D}}}
\newcommand{\bbT}{{\mathbb{T}}}
\newcommand{\eps}{\varepsilon}
\newcommand{\tht}{\theta}
\begin{document}
\title[Finite Time Singularity for the Generalized SQG Equation]
{Local Regularity and Finite Time Singularity \\ for the Generalized SQG Equation on the Half-Plane}

\author{Andrej Zlato\v s}
\address{\noindent Department of Mathematics \\ UC San Diego \\ La Jolla, CA 92093, USA \newline Email:
zlatos@ucsd.edu}


\begin{abstract}
We show that the generalized SQG equation with $\alpha\in(0,\frac 14]$ is locally well-posed on the half-plane in spaces of bounded integrable locally Lipschitz functions that are natural for its dynamic on domains with boundaries, and allow for some power growth of the derivative in the normal direction at the boundary.  We also show existence of solutions with smooth initial data that exhibit finite time blow-up  in the  whole local well-posedness parameter regime $\alpha\in(0,\frac 14]$, which is the first finite time singularity result for equations (as opposed to patch models) of this type.  Moreover, we prove sharpness of both these results by showing ill-posedness of the PDE in the above spaces when $\alpha>\frac 14$.
\end{abstract}

\maketitle

\section{Introduction and Main Results} \lb{S1}

The 2D Euler and (inviscid) SQG equations of fluid dynamics are both active scalar PDE
\begin{equation} \lb{1.1}
\partial_{t} \tht + u\cdot\nabla \tht =0
\end{equation}
with the Biot-Savart law
\begin{equation} \lb{1.1a}
u := -\nabla^\perp (-\Delta)^{-1+\alpha}\tht,
\end{equation}
where $\alpha=0$ in the Euler case and $\alpha=\frac 12$ in the SQG case, $\nabla^\perp:=(\partial_{x_2},-\partial_{x_1}),$ and $\Delta$ is the Dirichlet Laplacian if the PDE is stated on a domain with a boundary.  When $\alpha\in(0,\frac 12)$, \eqref{1.1} is called the generalized SQG equation (g-SQG), and this family interpolates between Euler and SQG.  The Euler equation was first written down by Euler in the 1750s, while SQG and g-SQG appeared in mathematics and physics literature in the last half-century, in works going back to Blumen \cite{Blu}, Pedlosky \cite{Ped}, and Constantin, Majda, and Tabak \cite{CMT} for SQG, and Pierrehumbert, Held, and Swanson \cite{PHS}, Smith et al.~\cite{Smith}, and Constantin, Iyer, and Wu \cite{CIW} for g-SQG.  

The Euler equation is globally well-posed on both $\bbR^2$ and domains with (smooth enough) boundaries.  For SQG and g-SQG  on $\bbR^2$, local well-posedness results in appropriately regular spaces go back to  Wu \cite{Wu} and Chae, Constantin, and Wu \cite{CCW}, while the question of global well-posedness vs.~blow-up remains one of the central open problems in fluid dynamics; see also Resnick \cite{Resnick} for existence of global weak $L^2$ solutions, Buckmaster, Shkoller, and Vicol \cite{BSV} for non-uniqueness of weak solutions (via convex integration techniques), and C\' ordoba and Mart\' inez-Zoroa \cite{CorMar} for ill-posedness results in $C^k$ for $k\ge 2$ (the last three papers all involve SQG).
  But so far, no well-posedness theory was developed for the SQG and g-SQG equations on domains with boundaries, other than by Constantin and Nguyen for SQG and regular enough solutions that vanish at the boundary  \cite{ConNgu}.
  In fact, only existence of weak  solutions for general $L^2$ initial data was obtained by Constantin and Nguyen \cite{ConNgu2} and by Constantin, Ignatova, and Nguyen \cite{CIN} for SQG, and by Nguyen \cite{Nguyen} for g-SQG, but such solutions might not be unique  (these four papers all considered bounded domains with smooth boundaries).


The reason behind this limited progress on domains with boundaries may be underpinned by the fact that while the Biot-Savart law is non-local for all $\alpha$, the relationship between the solution and its (time-dependent)  stream function $\psi:=-(-\Delta)^{-1+\alpha}\tht$, satisfying $\psi\equiv 0$ on the boundary, is local in one direction  when $\alpha=0$ (i.e.,  $\tht=\Delta\psi$).  This allows for $\psi$, and hence also for $u=\nabla^\perp\psi$ to be regular up to the boundary in the Euler case, as long as the boundary itself is regular (cf.~\cite{HanZla, HanZla2} and references therein), even if $\tht$ does not vanish there.

This property does not carry over to $\alpha>0$, 
and thus complicates development of the corresponding theory on domains with boundaries.  In fact, it is easy to see that when $\tht$ does not vanish on the boundary, its associated velocity $u$ will only be H\" older continuous there, with the normal derivative of the tangential component of $u$ growing as distance to the boundary to the power $-2\alpha$ (see \eqref{2.3} and \eqref{3.2} below). 
As a result, any such theory for \eqref{1.1} has to involve solutions whose regularity deteriorates at the boundary in some sense.  

In contrast, Kiselev, Yao, and the author did establish in 2016 local well-posedness for the corresponding patch problem on the half-plane when $\alpha\in(0,\frac 1{24})$, where patches with $H^3$ boundaries are allowed to touch the domain boundary \cite{KYZ} (Gancedo and Patel recently proved this for $H^2$ patches and  $\alpha\in(0,\frac 16)$ \cite{GanPat}).  In obtaining this result, they leveraged the fact that angles of the tangent lines of smooth enough patch boundaries must vanish at the domain boundary, so the lower regularity of the tangential velocity at that boundary will not distort the patches critically, at least for small enough $\alpha>0$.  Hence even though the velocity $u$ does not remain smooth up to the boundary, the patches themselves do stay regular, so \cite{KYZ} could avoid having to study solutions with singular boundary behaviors in this setting.  

The first main contribution of the present paper is the proof that there is in fact a family of spaces in which local well-posedness does hold for \eqref{1.1} with $\alpha\in(0,\frac 14]$.  Moreover,  this family is natural for the PDE in the sense that it captures the boundary growth of derivatives of solutions dictated by the dynamic of the equation --- and hence also captures the level of boundary regularity exhibited by solutions with {\it up-to-the-boundary smooth} initial data.  
We do this on the half-plane $\bbR\times\bbR^+$ here, where \eqref{1.1a} becomes
\begin{equation} \lb{1.2}
u(t,x):= \int_{\bbR\times\bbR^+} \left( \frac{(x-y)^\perp}{|x-y|^{2+2\alpha}} -
\frac{(x-\bar y)^\perp}{|x-\bar y|^{2+2\alpha}} \right) \tht(t,y) dy,
\end{equation}
with $\bar y:=(y_1,-y_2)$ and $y^{\perp}:=(y_2,- y_1)$ (the latter will be more convenient for us than the more standard definition $y^\perp:=(-y_2, y_1)$, and is equivalent via multiplying $\tht$ by $-1$).  We also dropped some factor $c_\alpha>0$ in \eqref{1.2}, which can be done via appropriate scaling in time.

Our crucial observation is that in order to obtain local regularity of solutions, one should allow {\it only} the normal derivative of the solution to deteriorate at the boundary, while requiring its tangential derivative to remain bounded.
Our key Lemma \ref{L.2.1} below shows that this then results in only the normal derivative of the tangential component of the velocity to also deteriorate at the  boundary (which in turn only affects the normal derivative of the solution), while the tangential derivative of the velocity as well as the normal derivative of its normal component will remain bounded.  In fact, we show that the tangential derivative of the normal component of the velocity then even vanishes at the boundary at an appropriate rate, despite the gradient of the solution (and of the velocity) being unbounded there.  This is again dictated by the dynamic of the PDE, and plays a crucial role in our proof of local well-posedness.

We therefore pick some  $\beta\in[0,1)$ and let $W^{1,\infty}_\beta(\bbR\times\bbR^+)$ be the space of all $\tht:\bbR\times\bbR^+ \to\bbR$ such that
\[
 \|\tht \|_{L^\infty}<\infty \qquad\text{and}\qquad 
 \|\tht\|_{\dot W^{1,\infty}_\beta} := \| \partial_{x_1} \tht \|_{L^\infty}
+ \| \min\{x_2^\beta,1\} \partial_{x_2} \tht \|_{L^\infty} <\infty,
\]
with the norm $ \|\tht\|_{W^{1,\infty}_\beta} :=  \|\tht \|_{L^\infty}+  \|\tht\|_{\dot W^{1,\infty}_\beta}$.
Note that if we define 
\begin{equation} \lb{1.3}
\tilde \tht(x_1,x_2):=\tht \left (x_1, \lambda_\beta(x_2) \right)
\end{equation}
on $\bbR\times\bbR^+$, with 
\begin{equation} \lb{1.3a}
\lambda_\beta(x_2):=
\begin{cases}
 (1-\beta)^{1/(1-\beta)} x_2^{1/(1-\beta)} & x_2\in(0,(1-\beta)^{-1}),\\ 
 x_2- \frac \beta{1-\beta} & x_2\ge (1-\beta)^{-1},
\end{cases}
\end{equation}
then 
\begin{equation} \lb{1.4}
\| \tht\|_{L^\infty} =  \|\tilde\tht\|_{L^\infty}  \qquad\text{and}\qquad 
\| \tht\|_{\dot W^{1,\infty}_\beta} =  \|\tilde\tht\|_{\dot W^{1,\infty}}.
\end{equation}
This means that if we ``stretch'' the strip $\bbR\times[0,1]$ in the vertical direction according to \eqref{1.3}, then  $ W^{1,\infty}_\beta$ becomes precisely $W^{1,\infty}$ (also note that clearly $ W^{1,\infty}_\beta \subseteq W^{1,\infty}_{\beta'}$ when $\beta\le\beta'$).  In these new coordinates, the PDE is again a transport equation  \eqref{2.7}, but with velocity $\tilde u$ from \eqref{2.8} below.  Its dynamic is not incompressible anymore, but it instead preserves the measure $\lambda_\beta'(x_2)dx$ on $\bbR\times\bbR^+$.  Moreover, Lemma~\ref{L.2.1} can be used to show that $\tilde u$ is in fact Lipschitz when $\tilde\tht$ is (see \eqref{2.13}), provided $\beta\in[2\alpha,1-2\alpha]$, which then yields local well-posedness via standard techniques.  This  thus yields our main local regularity result, Theorem \ref{T.1.1}(i) below.  

Having established local regularity, the natural next question is that of global regularity vs.~finite time blow-up.  The 2D Euler equation is known to be {\it critical}:  H\" older \cite{Holder} and Wolibner \cite{Wolibner} showed in 1933 that gradients of smooth solutions on smooth domains cannot grow faster than double-exponentially in time (see also Yudovich \cite{Yudth}), and Kiselev and \v Sver\' ak demonstrated such growth to be possible on domains with a boundary (presence of the boundary was crucial in this work, and while the author showed that gradient of $C^{1,\alpha}$ solutions can grow exponentially on $\bbT^2$ \cite{ZlaEulerexp}, the question of double-exponential gradient growth on $\bbR^2$ or $\bbT^2$, or even of exponential growth of smooth solutions on these domains, remains open).  This suggests $\alpha=0$ to be a borderline case for finite time blow-up, at least for domains with boundaries, and this was confirmed for the corresponding patch problem by Kiselev, Ryzhik, Yao, and the author in \cite{KRYZ}, where they showed finite time blowup for the $H^3$ g-SQG patch model on the half-plane when $\alpha\in(0,\frac 1{24})$, which is the interval of $\alpha$ where \cite{KYZ} established local well-posedness (they also proved global well-posedness for the corresponding Euler problem).  

Their argument in fact equally applies to the PDE \eqref{1.1}, and hence yields finite time blow-up for these $\alpha$ once Theorem~\ref{T.1.1}(i) is proved.  Moreover, the argument  works 
for all $\alpha>0$ such that $\frac {20^{-\alpha}} 6- \frac 1{1-2\alpha} -2^{-\alpha}>0$, which means that it breaks down at $\alpha\approx 0.05$
and is far from covering
the full range of $\alpha$ for which we proved local well-posedness in Theorem~\ref{T.1.1}(i).  Gancedo and Patel used in \cite{GanPat} the same approach with a slightly modified setup that applies up to $\alpha\approx 0.17$, which also  suffices to obtain finite time blow-up for all $\alpha\in(0,\frac 16)$ (i.e., the range of their local regularity result for $H^2$ g-SQG patches on the half-plane), but it does not extend beyond that and so
still does not cover all $\alpha\in(0,\frac 14]$.

We revisit here the argument from \cite{KRYZ} but perform a much more precise analysis, with 
two important improvements.  First, we obtain much smaller errors, and in fact  evaluate the relevant integrals {\it exactly} in the most critical case; and second, we sharpen the estimate on the ``worst case'' scenario for the integrands in those integrals.
  These  allow  us to obtain almost optimal bounds in this argument (we only give up relatively little in the identification of the worst case scenario), thanks to which we are in fact able to capture the full interval $\alpha \in (0,\frac 1{4}]$ on which we obtained local well-posedness.  Near-optimality of our estimates turns out to be crucial because even then our final bound makes the blow-up argument break down at $\alpha\approx 0.257$, barely past the minimum needed for a complete answer to the finite time blow-up question!
See the discussion after Lemma \ref{L.4.2} for more details on this.  

The above two results form the two parts of the following main result of  this paper.

\begin{theorem} \lb{T.1.1}
Let $\alpha\in(0,\frac 14]$, $\beta\in[2\alpha,1-2\alpha]$, and $X_\beta:=W^{1,\infty}_\beta(\bbR\times\bbR^+)\cap L^1(\bbR\times\bbR^+)$.

(i) For any $\tht_0\in X_\beta$, there is  $T_{\tht_0}\in(0,\infty]$ and a unique classical solution $\tht\in L^\infty_{\rm loc}([0,T_{\tht_0});X_\beta)$ to \eqref{1.1} with $\tht(0,\cdot)=\tht_0$, where  $T_{\tht_0}$ is bounded below by some positive function of $(\alpha,\beta,\|\tht_0\|_{X_\beta})$ that is decreasing in the last argument, and $\lim_{t\to T_{\tht_0}} \|\tht(t,\cdot)\|_{\dot W^{1,\infty}_\beta}=\infty$ whenever $T_{\tht_0}<\infty$.

(ii) There is  $\tht_0\in X_\beta\cap C^\infty(\bbR\times\bbR^+)$ such that $T_{\tht_0}<\infty$, so the unique solution $\tht$ blows up in finite time.
\end{theorem}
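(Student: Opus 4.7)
The plan is to push the dynamic forward through the stretching map \eqref{1.3}. Set $\tilde\tht(t,x_1,x_2):=\tht(t,x_1,\lambda_\beta(x_2))$; by \eqref{1.4} the map $\tht\mapsto\tilde\tht$ is an isometry between $W^{1,\infty}_\beta$ and $W^{1,\infty}$, so it suffices to solve the transformed PDE for $\tilde\tht$ in $W^{1,\infty}\cap L^1(\lambda_\beta'\,dx)$. The equation for $\tilde\tht$ is still a transport equation, with velocity $\tilde u$ from \eqref{2.8}, which by Lemma~\ref{L.2.1} and its consequence \eqref{2.13} is globally Lipschitz with constant controlled by $\|\tilde\tht\|_{W^{1,\infty}}+\|\tht\|_{L^1}$ precisely in the range $\beta\in[2\alpha,1-2\alpha]$. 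Standard Cauchy--Lipschitz theory then produces a unique bi-Lipschitz flow $\Phi_t$, and the Lagrangian formula $\tilde\tht(t,\cdot)=\tilde\tht_0\circ\Phi_t^{-1}$ defines the candidate solution. A Gronwall estimate on $\|\nabla\tilde\tht(t)\|_{L^\infty}$ yields local existence on a time interval depending only on $(\alpha,\beta,\|\tht_0\|_{X_\beta})$; preservation of $\|\tht\|_{L^1}$ follows from the fact that the flow preserves $\lambda_\beta'(x_2)\,dx_1dx_2$. Uniqueness is the standard comparison of two Lagrangian solutions of a Lipschitz ODE, and the blow-up criterion $\lim_{t\to T_{\tht_0}}\|\tht(t,\cdot)\|_{\dot W^{1,\infty}_\beta}=\infty$ follows from the same Gronwall inequality, since boundedness of that norm feeds back through Lemma~\ref{L.2.1} to keep the velocity Lipschitz and the solution extendible.

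\textbf{Part (ii).}
The plan is the hyperbolic-corner mechanism of Kiselev--Ryzhik--Yao--Zlato\v{s} from \cite{KRYZ}, carried out with enough precision to cover the full range $\alpha\in(0,\frac14]$. I would choose $\tht_0\in X_\beta$ odd with respect to the vertical line $\{x_1=0\}$ and supported near the origin, with a definite positive mass in $\{x_1>0\}$ close to the boundary; oddness is preserved by \eqref{1.2}, so $\{x_1=0\}$ remains an invariant wall and $u_1(t,0,x_2)\equiv 0$ for all $t$. Let $X(t)$ denote the infimum of $x_1$-coordinates of a suitably cutoff portion of $\supp\tht(t,\cdot)\cap\{x_1>0\}$ near the origin, and follow the trajectory carrying this tip. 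The goal is a differential inequality of the form $\dot X(t)\le -c\,X(t)^{-2\alpha}$ along that trajectory, valid as long as the classical solution from~(i) survives. Since $2\alpha<1$, this ODE forces $X(t)\to 0$ in finite time, which is incompatible with the continuation criterion of~(i) (the tip cannot reach the wall while $\dot W^{1,\infty}_\beta$-regularity and bi-Lipschitz flow persist), and hence forces $T_{\tht_0}<\infty$.

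\textbf{Where the real work is.}
The argument reduces to the sharp velocity bound $-u_1(t,X(t),y_2)\ge c\,X(t)^{-2\alpha}$ extracted from the Biot--Savart integral \eqref{1.2}. The image-charge term supplies attraction of the tip toward the axis, while the same-side positive mass of $\tht$ produces repulsion, and one must prove that the former strictly dominates. The crude bound used in \cite{KRYZ} loses too much and only survives up to $\alpha\approx 0.05$; the variant of \cite{GanPat} goes to $\alpha\approx 0.17$. Following the preview of Lemma~\ref{L.4.2}, I would compute the most singular one-dimensional integral \emph{exactly} (after the $y_2$-integration against the image-charge kernel collapses, the leading contribution becomes a Beta-type integral in $y_1$) and pair this with a refined identification of the worst-case mass configuration rather than a crude envelope. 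This is the main obstacle: the strategy is essentially tight (the author's own final bound barely reaches $\alpha\approx 0.257$), so every estimate must be near-sharp and any pointwise triangle-inequality bound on each term of $(x-y)^\perp|x-y|^{-2-2\alpha}-(x-\bar y)^\perp|x-\bar y|^{-2-2\alpha}$ destroys the cancellation that powers the argument near $\alpha=\tfrac14$.
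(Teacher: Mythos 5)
Your argument is the same as the paper's in essence: pass to the stretched coordinates via $\Lambda_\beta$, use Lemma~\ref{L.2.1} through \eqref{2.7a}--\eqref{2.13} to show $\tilde u$ is Lipschitz when $\tilde\tht$ is (which is exactly where the constraint $\beta\in[2\alpha,1-2\alpha]$ enters), derive the a priori bound \eqref{2.6}, and close via Gronwall. Your construction goes through a Lagrangian fixed point while the paper regularizes the Biot--Savart kernel and passes to the limit, but both are standard routes once the a priori Lipschitz bound \eqref{2.13} is in hand. One small caution: the formula $\tilde\tht(t,\cdot)=\tilde\tht_0\circ\Phi_t^{-1}$ does not by itself define a solution, since $\Phi_t$ depends on $\tilde u$ which depends on $\tilde\tht$; you need to say what iteration or compactness argument closes that loop, which is exactly why the paper chooses a kernel regularization.

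\textbf{Part (ii).} Your strategy matches the paper's at the level of mechanism (odd reflection across $\{x_1=0\}$, a tip of the support attracted to the invariant wall, contradiction with the continuation criterion), and you correctly identify that everything hinges on the sharp velocity estimate near the wall. But the proposal stops exactly at the point where the paper's actual contribution begins, and the missing pieces are not routine. What you do not supply, and what determines whether the argument reaches $\alpha=\tfrac14$ at all: (a) the comparison region is not a single Lagrangian trajectory but a growing trapezoid $L_t$ with left edge $I_t$ and slanted edge $J_t$, and you need two \emph{separate} estimates --- the $u_1$ bound \eqref{4.3} on $I_t$ and the outward-pointing bound \eqref{4.31} on $J_t$ (the latter rests on Lemma~\ref{L.4.3}, which your outline never mentions); (b) the decisive step in the $I_t$ estimate is Lemma~\ref{L.4.1}, which pins down the sign pattern of the four-term kernel $K_1$ well enough to place the triangle $L'''_x$ in the \emph{good} set rather than discarding it --- without this the constant in Lemma~\ref{L.4.2} turns negative well before $\alpha=\tfrac14$, as the paper explicitly notes; (c) the exact evaluation $\tfrac{2\alpha}{4-2^{-\alpha}}V_{\alpha,1}=f(1)-\tfrac{2+2^{-\alpha}-2^{1-2\alpha}}{(1-2\alpha)(4-2^{-\alpha})}$ and the monotonicity argument reducing the infimum over $b\in(0,1]$ to $b=1$, both needed because the final margin at $\alpha=\tfrac14$ is only about $0.03$. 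Saying ``I would compute the integral exactly'' is not a proof; this is where the work is, and without it you have reproduced [KRYZ] rather than proved the theorem. Also, a small slip: the velocity bound is $-u_1\gtrsim X^{1-2\alpha}$, not $X^{-2\alpha}$, so the ODE is $\dot X\le -cX^{1-2\alpha}$ as in \eqref{4.2}; it still reaches zero in finite time since $1-2\alpha<1$, but the exponent you wrote is the wrong one.
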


{\it Remarks.}  
1. Since the dynamic of \eqref{1.1} is measure-preserving, the last claim is equivalent to $\lim_{t\to T_\infty} \|\tht(t,\cdot)\|_{X_\beta}=\infty$.  By \eqref{1.4}, it is also equivalent to $\lim_{t\to T_\infty} \|\tilde \tht(t,\cdot)\|_{\dot W^{1,\infty}}=\infty$.
\smallskip

2. Here we refer to locally Lipschitz solutions for which \eqref{1.1} holds almost everywhere as {\it classical} solutions.  However, it is not difficult to see that the result extends to spaces and initial data with higher degree of (local)  regularity.
\smallskip

3.  One has a choice of $\beta$ when $\alpha\in(0,\frac 14)$, and arguably $\beta=2\alpha$ could be considered most natural as this reflects precisely the  level of boundary regularity exhibited by general solutions with smooth initial data.  
\smallskip

Theorem \ref{T.1.1}(ii) seems to be the first result for this general family of fluid PDEs that demonstrates finite time singularity from smooth initial data. 
We compare it to Elgindi's recent surprising blow-up result for the axisymmetric Euler equation without swirl on $\bbR^3$ \cite{Elgindi},
which requires initial data to have H\"older continuous vorticity (it then becomes unbounded in finite time), something that smooth axisymmetric swirl-free solutions to Euler on $\bbR^3$ cannot achieve because the PDE is globally regular in that setting (this result may be similar in spirit to the exponential growth examples for Euler on $\bbT^2$ in \cite{ZlaEulerexp}).  We also mention here preprints \cite{CheHou, CheHou2}  by Chen and Hou, the second of which was completed shortly after the present work.  These together provide finite time smooth initial data blow-up results for both  3D axisymmetric Euler and 2D Boussinesq equations on bounded domains,
and their proofs heavily employ computer-assisted methods.  

We  note that the proof of Theorem \ref{T.1.1}(ii) suggests that the blow-up there is happening via $\partial_{x_1}\tht (t,\cdot)$ becoming unbounded near the origin (it is a contradiction argument, so it does not actually determine the exact nature of the singularity).
Since functions in $X_\beta$ have $\partial_{x_1}\tht$ bounded up to the boundary, this further illustrates that the blow-up in Theorem \ref{T.1.1}(ii) is truly a non-linear phenomenon, rather than somehow caused by the space $X_\beta$.

Our second main result provides a complete answer to the natural question of sharpness of (both parts of) Theorem \ref{T.1.1}.

 
\begin{theorem} \lb{T.1.2}
PDE \eqref{1.1} is ill-posed in $X_\beta$ for any $\alpha\in(0,\frac 12)$ and $\beta\in[0,1)\setminus [2\alpha,1-2\alpha]$ (with $[2\alpha,1-2\alpha]:=\emptyset$ when $\alpha>\frac 14$). 
\end{theorem}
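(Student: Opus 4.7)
The plan is to argue by contradiction in both sub-regimes of the condition $\beta\in[0,1)\setminus[2\alpha,1-2\alpha]$, by constructing explicit $\theta_0\in X_\beta$ whose associated solution cannot remain in $X_\beta$ for any positive time. Both constructions exploit the same mechanism that restricts $\beta$ in Theorem~\ref{T.1.1}(i): outside $[2\alpha,1-2\alpha]$, the Biot--Savart law \eqref{1.2} produces a component of $\nabla u$ whose boundary singularity is strictly worse than what the transport dynamics can sustain inside $X_\beta$.

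For $\beta<2\alpha$, I would choose $\theta_0\in C_c^\infty(\overline{\bbR\times\bbR^+})\subset X_\beta$ with $\theta_0(0,0)>0$ and $\partial_{x_1}\theta_0(0,0)>0$. A calculation in the spirit of \eqref{2.3} that expands \eqref{1.2} near $\{x_2=0\}$ yields, for every $\theta(t,\cdot)\in X_\beta$,
\begin{equation*}
\partial_{x_2}u_1(t,x_1,x_2)=-c_\alpha\,\theta(t,x_1,0)\,x_2^{-2\alpha}+\oh(x_2^{-2\alpha}),\qquad x_2\downarrow 0,
\end{equation*}
with an explicit $c_\alpha>0$. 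Suppose, for contradiction, that $\theta\in L^\infty_{\loc}([0,T);X_\beta)$ is a classical solution. Because $u_2$ vanishes on $\{x_2=0\}$, the trace $\theta(t,\cdot,0)$ and its tangential derivative are transported by the bounded 1D flow of $u_1(t,\cdot,0)$ and so remain close to their $t=0$ values on a short time interval. Integrating the $x_2$-derivative of \eqref{1.1} along the Lagrangian trajectory through $(0,x_2)$ with $x_2\ll 1$, the source $-\partial_{x_2}u_1\,\partial_{x_1}\theta$ dominates and gives $|\partial_{x_2}\theta(t,\cdot,x_2)|\gtrsim t\,x_2^{-2\alpha}$. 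Since $\beta<2\alpha$, this forces $\min\{x_2^\beta,1\}|\partial_{x_2}\theta|$ to be unbounded, violating $\theta(t,\cdot)\in X_\beta$.

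For $\beta>1-2\alpha$, I would take $\theta_0(x_1,x_2):=\chi(x_1)\rho(x_2)+\phi(x_1)$ with $\chi,\phi\in C_c^\infty(\bbR)$, $\chi(0)>0$, $\phi(0)=0$, $\phi'(0)>0$, and $\rho\in C^\infty([0,\infty))$ compactly supported with $\rho(0)=0$ and $\rho'(x_2)=x_2^{-\beta}$ on $(0,\tfrac12]$. One checks that $\theta_0\in X_\beta$ and that $\partial_{x_2}\theta_0(x_1,x_2)\sim\chi(x_1)\,x_2^{-\beta}$ near the boundary. The symmetric expansion of \eqref{1.2} then gives, for every $\theta(t,\cdot)\in X_\beta$,
\begin{equation*}
\partial_{x_1}u_2(t,x_1,x_2)=c'_\alpha\,\partial_{x_1}\theta(t,x_1,0)\,x_2^{1-2\alpha}+\oh(x_2^{1-2\alpha}),\qquad x_2\downarrow 0.
\end{equation*}
Assuming a classical solution in $X_\beta$, the same boundary-trace transport argument preserves the nondegeneracy $\partial_{x_1}\theta(t,\cdot,0)=\phi'(\cdot)+\Oh(t)$, and integrating the $x_1$-derivative of \eqref{1.1} along the Lagrangian flow yields $|\partial_{x_1}\theta(t,\cdot,x_2)|\gtrsim t\,x_2^{1-2\alpha-\beta}$, which diverges as $x_2\downarrow 0$ on $\{\chi\ne 0\}$ since $\beta>1-2\alpha$. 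This contradicts $\|\partial_{x_1}\theta(t,\cdot)\|_{L^\infty}<\infty$.

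The principal technical obstacle is establishing the two boundary expansions of $\partial_{x_2}u_1$ and $\partial_{x_1}u_2$ above with explicit leading coefficients that are both nondegenerate and stable under small $L^\infty$ perturbations of the trace of $\theta$ (respectively of its tangential derivative). These are calculations in the same vein as \eqref{2.3} and \eqref{3.2}, isolating the leading $x_2^{-2\alpha}$ (respectively $x_2^{1-2\alpha}$) contribution of the part of the integral in \eqref{1.2} coming from a small boundary neighborhood; once they are in hand, the contradiction reduces to short-time propagation of nondegeneracy of the boundary trace, which is routine because $u_2|_{\{x_2=0\}}=0$ makes the relevant boundary dynamics a one-dimensional bounded transport.
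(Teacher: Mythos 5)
Your proposal has a fundamental gap in the regime $\beta>1-2\alpha$, and a less serious but still substantive gap for $\beta<2\alpha$.

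For $\beta>1-2\alpha$ the key claimed expansion
\begin{equation*}
\partial_{x_1}u_2(t,x_1,x_2)=c'_\alpha\,\partial_{x_1}\theta(t,x_1,0)\,x_2^{1-2\alpha}+\oh(x_2^{1-2\alpha})
\end{equation*}
is false for your choice $\theta_0=\chi\rho+\phi$. If you compute $\partial_{x_1}u_2$ for a datum whose $\partial_{x_1}\theta$ is continuous up to the boundary (as is the case for both $\chi'\rho$ and $\phi'$), the $\Oh(x_2^{1-2\alpha})$ contributions from the near field and far field both cancel to leading order by oddness in $y_1$, and the genuine leading behavior is $\Oh(x_2)$, not $\Oh(x_2^{1-2\alpha})$. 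For instance with $\theta(y)=\sgn(y_2)\phi(y_1)$ one finds $\partial_{x_1}u_2(x)=-2x_2\,F(x_1,0)+\oh(x_2)$, where $F(x_1,0)$ is a fractional-type singular integral of $\phi'$, not $\phi'(x_1)$ times a constant. The $x_2^{1-2\alpha}$ scaling that the paper exploits in \eqref{3.22} is produced by a genuinely nonsmooth corner: the paper's $\phi_0$ is supported in $[-3,0]\times[0,3]$, so $\partial_{x_1}\phi_0$ equals $-1$ on a wedge $\{-x_2<x_1<0<x_2\}$ shrinking to the origin and $0$ elsewhere. That jump of $\partial_{x_1}\theta$ near a boundary corner is what saturates the upper bound in \eqref{2.1a}; your $\theta_0$ has no such feature, so the source you want to integrate is $\Oh(x_2^{1-\beta})$ and no divergence occurs. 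Relatedly, the paper does \emph{not} prove nonexistence in this regime: it only shows that a solution with datum $\theta_0$ cannot be the $X_\beta$-limit of the natural truncated solutions $\theta_{n_0}$, i.e.\ failure of continuous dependence. Your proposal would prove the strictly stronger statement that no solution exists, and that would require a structurally different datum than the one you wrote down (and quite possibly is simply not true).

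For $\beta<2\alpha$ your heuristic is closer to the truth: the leading boundary singularity of $\partial_{x_2}u_1$ really is $\sim c_\alpha\,\theta(t,x_1,0)\,x_2^{-2\alpha}$ (with a positive sign rather than the negative sign you wrote, compare \eqref{3.2}), because the odd extension of $\theta$ across $\{x_2=0\}$ has a genuine jump whenever the boundary trace is nonzero. However, the route you propose \textemdash differentiate the transport equation in $x_2$ and integrate along Lagrangian trajectories \textemdash\ is not justified at the level of $X_\beta$ solutions. The velocity is only Lipschitz after the change of coordinates $\Lambda_\beta$ (equation \eqref{2.13}); the flow is therefore Lipschitz, not $C^1$, and $\nabla u$ is unbounded near the boundary, so the ODE
\begin{equation*}
\frac{d}{dt}\,\partial_{x_2}\theta(t,z^{t,x})=-\partial_{x_2}u_1\,\partial_{x_1}\theta-\partial_{x_2}u_2\,\partial_{x_2}\theta
\end{equation*}
is at best valid a.e.\ and involves an unbounded coefficient that you have not controlled. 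The paper's construction sidesteps all of this: it uses a smooth bulk $\phi_0$ only to generate the $x_2^{-2\alpha}$ shear, plants a sequence of ``cap'' perturbations $a_n\phi_1(\cdot)$ at heights $2^{-4n}$, and then tracks two explicit characteristics $z^{t,y_n}$, $z^{t,y_n'}$, showing they collide in the $x_1$-direction by time $T_n\to 0$ while the values of $\tilde\theta_0$ at $y_n,y_n'$ differ by $a_n$. This produces a quantitative lower bound on the Lipschitz constant of $\tilde\theta(T_n,\cdot)$ that diverges, with no need to differentiate the PDE. If you want to pursue your Eulerian approach, the part that needs real work is exactly this rigorous justification; the lower-bound constant in the $\partial_{x_2}u_1$ expansion is, as you say, a computation, but that is the easier half.
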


That is,  for all the couples  $(\alpha,\beta)$ not covered by Theorem \ref{T.1.1}, the PDE \eqref{1.1} is ill-posed in $X_\beta$ (see the start of Section \ref{S3} for details).  The reason behind this lies in Lemma~\ref{L.2.1}, which through the relationship between $u$ and $\tilde u$ in \eqref{2.7a} suggests potential problems with local regularity due to $\partial_{x_2} \tilde u_1$ becoming unbounded near the boundary when $\beta<2\alpha$ and $\partial_{x_1} \tilde u_2$ becoming unbounded there when $\beta>1-2\alpha$.  We show that this indeed results in breakdown of local well-posedness in $X_\beta$ for such $(\alpha, \beta)$.

The proof in fact shows that when $\beta\in[0,2\alpha)$, then the vertical domain stretching \eqref{1.3} is too weak for obtaining well-posedness, while it is also too strong when $\beta\in(1-2\alpha,1).$
This means that not only $X_\beta$ with $\beta\in[2\alpha,1-2\alpha]$ are the right spaces for \eqref{1.1} on the half-plane with respect to the $W^{1,\infty}$ norm, both also that other rates of stretching will not be helpful when it comes to local well-posedness in this norm.

Moreover, we note that since Lemma \ref{L.2.1} shows that $\partial_{x_1} u_1$ is also bounded up to the boundary when $\tht\in X_\beta$, it follows that the lemma (and  hence also Theorem \ref{T.1.1}(i)) cannot possibly extend to $\alpha\ge \frac 12$, because $u_1$ is then unbounded unless $\tht$ vanishes on the boundary.

 
 Finally, our blow-up proof applies identically to the g-SQG patch model from \cite{KYZ, KRYZ}, even though local regularity for that model is presently only known for $\alpha<\frac 1{24}$ (and $\alpha<\frac 1{6}$ in \cite{GanPat}).
 
\begin{theorem} \lb{T.1.3}
For each $\alpha\in(0,\frac 14]$, there are solutions to the $H^3$ gSQG patch model on $\bbR\times\bbR^+$ from \cite{KRYZ, KYZ} (as well as the $H^2$ version from \cite{GanPat}) that cease to exist in finite time.
If the local well-posedness result for this model from \cite{KRYZ, KYZ} (resp.~\cite{GanPat}) holds for some $\alpha\in(0,\frac 14]$, then there exist such solutions 
that exhibit finite time blow-up in the sense of their $H^3$ (resp.~$H^2$) norms diverging to $\infty$ at some finite time.
\end{theorem}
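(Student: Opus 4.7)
My plan is to recycle the blow-up argument for Theorem \ref{T.1.1}(ii) in the patch setting, and then convert the resulting finite-time existence breakdown into $H^3$ (resp.~$H^2$) norm blow-up via the standard continuation criterion that accompanies the local well-posedness theories of \cite{KRYZ,KYZ,GanPat}. For the initial data I would take a patch $\chi_{\Omega_0}$ whose boundary lies in $H^3$ (resp.~$H^2$) and which is reflection-symmetric about the $x_2$-axis and placed near the boundary of the half-plane exactly as in the ``falling patch'' configuration used both in \cite{KRYZ} and (in the same way) in the proof of Theorem \ref{T.1.1}(ii).

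The key point is that the proof of Theorem \ref{T.1.1}(ii) is a contradiction argument that works exclusively with a specific weighted moment $M(t)$ of $\tht(t,\cdot)$: assuming the solution persists long enough, one computes $\dot M(t)$ using the transport form of \eqref{1.1} together with precise pointwise estimates on the Biot-Savart kernel from \eqref{1.2} (the sharpened ``worst case'' analysis promised after Lemma \ref{L.4.2}), and this forces $M(t)$ to drop below a value that it must exceed for purely geometric and $L^\infty$ reasons.  Every step of this derivation uses only the $L^1\cap L^\infty$ information about $\tht$ and the distributional form of the transport equation; no pointwise smoothness of $\tht$ in the interior is invoked, and in particular membership in $W^{1,\infty}_\beta$ is not needed.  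Consequently, with $\tht(t,\cdot)=\chi_{\Omega(t)}$, the identical argument applies to any $H^3$ (resp.~$H^2$) patch evolution and yields that, starting from the chosen $\chi_{\Omega_0}$, no such patch solution can exist on an arbitrarily long time interval.  This is the first claim of Theorem \ref{T.1.3}.

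For the second claim, assume local well-posedness of the patch model (in $H^3$ for \cite{KRYZ,KYZ}, or $H^2$ for \cite{GanPat}) holds at the given $\alpha\in(0,\tfrac14]$.  Let $T^\ast$ be the maximal existence time of the unique patch solution starting from $\chi_{\Omega_0}$; by the first claim, $T^\ast<\infty$.  The continuation criterion supplied by those references says that the solution can be extended past $T^\ast$ unless $\|\partial\Omega(t)\|_{H^3}$ (resp.~$\|\partial\Omega(t)\|_{H^2}$) diverges as $t\to T^{\ast-}$, so this norm must indeed blow up, giving the stated finite time singularity.  The main (and essentially only) obstacle is the routine but careful inspection needed to justify the second paragraph above: one must confirm that in the proof of Theorem \ref{T.1.1}(ii), every instance where derivatives of $\tht$ appear is either an application of the transport equation (which holds distributionally for patches) or a manipulation under the integral sign against a smooth kernel or test function, so that nothing breaks when $\tht$ is replaced by the indicator of an $H^3$ (resp.~$H^2$) patch.
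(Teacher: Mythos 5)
Your first claim is handled in essentially the same spirit as the paper, which explicitly notes that the barrier argument in Section~\ref{S4} ``applies to both \eqref{1.1} and the patch problem,'' since it only uses odd symmetry, the $L^1\cap L^\infty$ bounds, the transport structure, and the pointwise kernel estimates of Lemmas~\ref{L.4.2}--\ref{L.4.3}. (A cosmetic point: the proof of Theorem~\ref{T.1.1}(ii) is not a ``weighted moment'' calculation --- there is no quantity $M(t)$ whose derivative is computed. It is a geometric barrier argument: one shows the trapezoid $L_t$ from \eqref{4.1} stays inside $\Omega_t$ by checking that $u(t,\cdot)$ points outward along $I_t\cup J_t$, and then derives a contradiction from $L_t$ reaching the origin. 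Your mischaracterization does not affect the conclusion, but it is not what is happening.)

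The real gap is in your second claim. You invoke a ``standard continuation criterion'' asserting that the patch solution can be continued unless $\|\partial\Omega(t)\|_{H^3}$ (resp.~$H^2$) diverges. This is not the continuation criterion that actually accompanies the local well-posedness theory in \cite{KYZ,KRYZ,GanPat}: there the existence time is controlled not only by the Sobolev norm of the boundary curve but also by an arc-chord (non-self-intersection / non-degenerate-touching) quantity, so a priori the solution could cease to exist via a splash-type degeneracy while the $H^3$ (resp.~$H^2$) norm stays bounded. Ruling this scenario out is precisely the content of the improved regularity criterion in \cite{JeoZla} (Theorem~1.4 there), and the paper explicitly states that the second claim of Theorem~\ref{T.1.3} follows from the first claim together with that result. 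Without that input, your second paragraph shows only that \emph{some} quantity in the continuation criterion diverges, not that the $H^3$ (resp.~$H^2$) norm does.
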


We note that the second claim follows from the first and from the proof of Theorem 1.4 in a recent work of Jeon and the author \cite{JeoZla}.

\bigskip\noindent
{\bf Organization of the Paper and Acknowledgement.}
Our main results, Theorem~\ref{T.1.1}(i), Theorem \ref{T.1.2}, and Theorem \ref{T.1.1}(ii), are proved in Sections \ref{S2}, \ref{S3}, and \ref{S4} in that order.  Sections~\ref{S3} and \ref{S4} are  independent of each other and either of them can be read first.  The author acknowledges partial support by NSF grant DMS-1900943.

\section{Proof of Theorem \ref{T.1.1}(i)} \lb{S2}

Below, dependence of all constants $C_{.}<\infty$ will always be specified in their subscripts (so $C$ will be a universal constant), and the constants are always some finite numbers but may change from line to line.  
The following two lemmas are stated at any fixed time $t$, so we will suppress $t$ in the notation.  

\begin{lemma}  \lb{L.2.0}
If $\tht\in L^1(\bbR\times\bbR^+) \cap L^\infty(\bbR\times\bbR^+)$ and $\alpha\in(0,\frac 12)$, then $u$ from \eqref{1.1a} satisfies  
\beq \lb{2.0}
 \| u \|_{C^{1-2\alpha}} 
 \le C_\alpha  \|  \tht \|_{L^1 \cap L^\infty} 
\eeq
and $\lim_{x\to (s,0)} u_2(x)= 0$ for all $s\in\bbR$.
\end{lemma}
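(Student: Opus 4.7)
The plan is to view $u$ from \eqref{1.2} as a singular integral operator on $\bbR\times\bbR^+$ with kernel
\[
K(x,y) := \frac{(x-y)^\perp}{|x-y|^{2+2\alpha}} - \frac{(x-\bar y)^\perp}{|x-\bar y|^{2+2\alpha}},
\]
and to extract both claims from two pointwise kernel bounds:
\[
|K(x,y)| \le C\,|x-y|^{-(1+2\alpha)} \qquad\text{and}\qquad |\nabla_x K(x,y)| \le C\,|x-y|^{-(2+2\alpha)}.
\]
Both are direct estimates on a Riesz-type kernel, combined with the elementary observation that for $x,y\in\bbR\times\bbR^+$ one has $|x-\bar y|\ge |x-y|$, so the reflected piece and its $x$-derivative are controlled by the corresponding direct ones.

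Granted these, I would obtain $\|u\|_{L^\infty}\le C_\alpha\|\tht\|_{L^1\cap L^\infty}$ by splitting the $y$-integral into $\{|y-x|\le 1\}$ and its complement: the first yields at most $C\|\tht\|_{L^\infty}\int_0^1 r^{-2\alpha}dr$, finite precisely because $\alpha<\tfrac12$; the second gives at most $C\|\tht\|_{L^1}$ since $|K|\le C$ there. For the Hölder seminorm, given $x,z$ with $\delta:=|x-z|$, I would use the standard annular splitting. On $\{|y-x|\le 2\delta\}$, the first kernel bound controls $K(x,y)$ and $K(z,y)$ separately, contributing $C_\alpha\|\tht\|_{L^\infty}\delta^{1-2\alpha}$. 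On $\{|y-x|>2\delta\}$, the mean value theorem together with the gradient bound gives $|K(x,y)-K(z,y)|\le C\delta\,|x-y|^{-(2+2\alpha)}$, whose $y$-integral is $C_\alpha\delta\cdot\delta^{-2\alpha}\|\tht\|_{L^\infty}=C_\alpha\|\tht\|_{L^\infty}\delta^{1-2\alpha}$.

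For the boundary vanishing of $u_2$, the crucial algebraic observation is that $|x-y|=|x-\bar y|$ whenever $x_2=0$, so the second-coordinate kernel
\[
K_2(x,y) = (y_1-x_1)\!\left(\frac{1}{|x-y|^{2+2\alpha}}-\frac{1}{|x-\bar y|^{2+2\alpha}}\right)
\]
vanishes identically on the boundary. To promote this to a limit, I would fix $\delta>0$ and split at scale $\delta$ around $(s,0)$. The contribution of $\{|y-(s,0)|\le\delta\}$ is bounded uniformly for $x$ near $(s,0)$ by $C_\alpha\|\tht\|_{L^\infty}\delta^{1-2\alpha}$, by the same integration as in the $L^\infty$ bound. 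On the complement $K_2(x,y)\theta(y)\to 0$ pointwise as $x\to(s,0)$ by continuity of $K_2$ off the diagonal, and is dominated uniformly by $C|\tht(y)|\,|y-(s,0)|^{-(1+2\alpha)}$, which is integrable over $\{|y-(s,0)|>\delta\}$ thanks to $\tht\in L^1\cap L^\infty$. Dominated convergence then yields $\limsup_{x\to(s,0)}|u_2(x)|\le C_\alpha\|\tht\|_{L^\infty}\delta^{1-2\alpha}$, and sending $\delta\to 0$ finishes the proof.

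The main obstacle is mostly bookkeeping: verifying the gradient bound in the presence of the reflected term (handled cleanly by $|x-\bar y|\ge|x-y|$) and constructing a single dominating function valid uniformly on a neighborhood of the boundary point for the last claim. Both are routine once the correct decompositions are chosen.
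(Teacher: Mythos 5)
Your argument is correct and follows essentially the approach one would use to prove the H\"older bound that the paper simply cites from \cite{KRYZ} (Lemma 3.1 there): pointwise kernel and gradient bounds of Riesz type, the observation that $|x-\bar y|\ge|x-y|$ on the half-plane so the reflected term is dominated by the direct one, and the standard annular splitting at scale $|x-z|$. For the vanishing of $u_2$ at the boundary you run a dominated-convergence argument from scratch, while the paper instead notes that the just-established H\"older continuity already extends $u$ continuously to $\bbR\times\{0\}$, where the second component of the kernel in \eqref{1.2} vanishes identically because $|x-y|=|x-\bar y|$ when $x_2=0$ --- a shortcut you could also have taken, but both routes turn on the same cancellation.
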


\begin{proof}
Estimate \eqref{2.0} is just Lemma 3.1 in \cite{KRYZ}.  It means that $u$ extends continuously to $\bbR\times\{0\}$, and there we have $u_2\equiv 0$ because the second coordinate of the parenthesis in \eqref{1.2} vanishes when $x_2=0$.
\end{proof}

\begin{lemma}  \lb{L.2.1}
If $\tht\in X_\beta$ with $\alpha\in[0,\frac 12)$ and $\beta\in[0,1)$, then $u$ from \eqref{1.1a} satisfies
\begin{align}
\| \partial_{x_1} u_1 \|_{L^\infty} & \le C_\alpha  ( \| \partial_{x_1} \tht \|_{L^\infty} + \|  \tht \|_{L^1 \cap L^\infty}) , \lb{2.1} \\
\| \max\{x_2^{2\alpha-1},1\}\partial_{x_1} u_2 \|_{L^\infty} & \le C_\alpha    ( \| \partial_{x_1} \tht \|_{L^\infty} + \|  \tht \|_{L^1 \cap L^\infty}),  \lb{2.1a} \\
\| \partial_{x_2} u_2 \|_{L^\infty} & \le C_\alpha  ( \| \partial_{x_1} \tht \|_{L^\infty} + \|  \tht \|_{L^1 \cap L^\infty}), \lb{2.2} \\
\| \min\{x_2^{2\alpha},1\} \partial_{x_2} u_1 \|_{L^\infty} & \le C_{\alpha,\beta}   \big( \| \min\{x_2^\beta,1\} \partial_{x_2} \tht \|_{L^\infty}  + \|  \tht \|_{L^1 \cap L^\infty} \big)  \lb{2.3}. 
\end{align}
\end{lemma}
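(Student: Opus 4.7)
My plan is to reduce the four bounds to one another where possible and then exploit the algebraic structure of the Biot--Savart kernel together with integration by parts in the appropriate variable. Since $u=\nabla^{\perp}(-\Delta)^{-1+\alpha}\theta$ is divergence-free on $\bbR\times\bbR^+$, one has $\partial_{x_1} u_1 = -\partial_{x_2} u_2$, so \eqref{2.1} and \eqref{2.2} are equivalent and it suffices to establish \eqref{2.1}, \eqref{2.1a}, and \eqref{2.3}.

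For \eqref{2.1}, I would extend $\theta$ by odd reflection across $\{x_2=0\}$ so that \eqref{1.2} becomes the full-plane $\alpha$-Biot--Savart integral of $\theta_{\rm odd}$. The formal kernel of $\partial_{x_1}u_1$ is $K(z)=-(2+2\alpha)z_1 z_2/|z|^{4+2\alpha}$, odd in each of $z_1,z_2$ separately. Using oddness in $z_1$ to symmetrize rewrites $\partial_{x_1}u_1(x)$ as an integral of $K(z)$ against the tangential difference $\theta_{\rm odd}(x_1-z_1,x_2-z_2)-\theta_{\rm odd}(x_1+z_1,x_2-z_2)$, which is bounded by $2|z_1|\|\partial_{x_1}\theta\|_{L^\infty}$. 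Since $|z_1||K(z)|\le C|z|^{-1-2\alpha}$ is integrable near $z=0$ (as $2\alpha<1$) and the tail $|z|\ge1$ is handled by $\|\theta\|_{L^1\cap L^\infty}$, \eqref{2.1} follows, and hence \eqref{2.2} by incompressibility.

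For \eqref{2.1a}, both kernels in \eqref{1.2} depend on $x_1$ only through $x_1-y_1$, so $\partial_{x_1}=-\partial_{y_1}$ on them and integration by parts in $y_1$ yields
\[
\partial_{x_1}u_2(x) = \int_{\bbR\times\bbR^+}(y_1-x_1)\bigl[|x-y|^{-2-2\alpha}-|x-\bar y|^{-2-2\alpha}\bigr]\partial_{y_1}\theta(y)\,dy.
\]
The algebraic identity $|x-y|^2-|x-\bar y|^2=-4x_2y_2$ combined with the mean-value theorem produces the key cancellation $\bigl||x-y|^{-2-2\alpha}-|x-\bar y|^{-2-2\alpha}\bigr|\le C_\alpha x_2 y_2\min(|x-y|,|x-\bar y|)^{-4-2\alpha}$, and a direct dyadic computation then gives $\int|y_1-x_1|\bigl||x-y|^{-2-2\alpha}-|x-\bar y|^{-2-2\alpha}\bigr|\,dy\le C_\alpha x_2^{1-2\alpha}$ for $x_2\in(0,1]$ (uniformly bounded for $x_2\ge1$ after absorbing the tail via $\|\theta\|_{L^1\cap L^\infty}$), which is exactly the weight appearing in \eqref{2.1a}.

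For \eqref{2.3}, I would exploit that $\partial_{x_2}=-\partial_{y_2}$ on the direct kernel while $\partial_{x_2}=+\partial_{y_2}$ on the reflected one, so integration by parts in $y_2$ generates a boundary term at $y_2=0$ (where the two kernels agree) plus an interior term. The boundary contribution is the Poisson-like integral $2\int_{\bbR} x_2\,\theta(y_1,0)((x_1-y_1)^2+x_2^2)^{-1-\alpha}\,dy_1$, bounded by $C_\alpha\|\theta\|_{L^\infty}\max\{x_2^{-2\alpha},1\}$. The interior term equals $\int K_{11}(x,y)\partial_{y_2}\theta(y)\,dy$, which I would bound using the pointwise inequality $|\partial_{y_2}\theta(y)|\le \max\{y_2^{-\beta},1\}\|\min\{y_2^\beta,1\}\partial_{y_2}\theta\|_{L^\infty}$ and then estimate $\int|K_{11}(x,y)|\max\{y_2^{-\beta},1\}\,dy$ by splitting the $y_2$-integration into $y_2\le x_2$, $x_2\le y_2\le 1$, and $y_2\ge1$, with the cancellation between the direct and reflected pieces of $K_{11}$ being crucial away from the boundary. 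Multiplying the combined bound by the weight $\min\{x_2^{2\alpha},1\}$ yields \eqref{2.3}. The main obstacle is the precision required here: the boundary behavior as $y_2\to 0^+$ is controlled only because $\beta<1$, and the balance against $\min\{x_2^{2\alpha},1\}$ works only because $\alpha<1/2$, with the constant $C_{\alpha,\beta}$ blowing up at these endpoints and reflecting the borderline compatibility of both constraints.
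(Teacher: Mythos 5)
Your approach is genuinely different from the paper's: the paper never differentiates under the integral, instead estimating finite-difference quotients $|u(x+he_j)-u(x)|$ directly throughout (which also avoids any principal-value / differentiation-under-the-integral issues, since the raw kernel of $\nabla u$ is $\sim|z|^{-2-2\alpha}$ and not locally integrable), whereas you integrate by parts in $y_1$ resp.\ $y_2$ to move the derivative onto $\theta$, and you additionally shortcut \eqref{2.2} via $\partial_{x_1}u_1=-\partial_{x_2}u_2$ from $\text{div}\,u=0$ (the paper proves \eqref{2.2} independently by a $\theta^*$-comparison argument). That divergence-free reduction is valid and does save work; it holds distributionally once $\partial_{x_1}u_1\in L^\infty$ is established. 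On the whole the integration-by-parts route is workable and arguably more conceptual, but your sketch has a few concrete problems that need fixing.

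First, for \eqref{2.1a}: your MVT bound $\bigl||x-y|^{-2-2\alpha}-|x-\bar y|^{-2-2\alpha}\bigr|\le C_\alpha x_2y_2\min(|x-y|,|x-\bar y|)^{-4-2\alpha}$ is correct, but $\min(|x-y|,|x-\bar y|)=|x-y|$ always (for $x_2,y_2>0$), and near $y=x$ the resulting integrand $|y_1-x_1|\,x_2y_2|x-y|^{-4-2\alpha}\sim x_2^2|x-y|^{-3-2\alpha}$ is \emph{not} locally integrable. You must switch to the crude bound $|x-y|^{-2-2\alpha}$ on $|x-y|\lesssim x_2$ (as the paper does with its $[-x_2,x_2]^2$ region) and use the MVT cancellation only on the far part; stating that ``a direct dyadic computation'' applied to the MVT bound alone gives $C_\alpha x_2^{1-2\alpha}$ is false as written.

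Second, for \eqref{2.3}: after integrating by parts in $y_2$ the interior kernel is the \emph{sum}
\[
\frac{x_2-y_2}{|x-y|^{2+2\alpha}}+\frac{x_2+y_2}{|x-\bar y|^{2+2\alpha}},
\]
not the $u_1$-kernel $K_{11}$, because $\partial_{x_2}$ acts as $-\partial_{y_2}$ on the direct piece but $+\partial_{y_2}$ on the reflected one and both change sign when the IBP moves $\partial_{y_2}$ onto $\theta$. You describe the decay at infinity as coming from ``cancellation between the direct and reflected pieces'', but for this kernel the decay actually comes from the numerators summing to $2x_2$ (the cross terms $\pm y_2$ cancel), which then gives $\lesssim x_2|x-y|^{-2-2\alpha}$ for far $y$ after the usual MVT step on the denominators. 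More seriously, the IBP representation is not globally usable: $\partial_{y_2}\theta$ need not be integrable, so $\int \tilde K\,\partial_{y_2}\theta\,dy$ over $|x-y|\gtrsim\max\{x_2,1\}$ gives a contribution $\sim x_2^{1-2\alpha}M$ that is unbounded in $x_2$, whereas \eqref{2.3} requires $|\partial_{x_2}u_1|\le C$ for $x_2\ge1$. You need to handle the far region \emph{before} integrating by parts, bounding it by $\|\theta\|_{L^1\cap L^\infty}$ directly (this is exactly the role of $A_3,A_4$ in the paper), and only integrate by parts on a bounded neighbourhood of $x$ where the boundary and interior terms have the structure you describe.
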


\begin{proof}
From \eqref{1.2} we have
\begin{equation} \lb{2.4}
u(x)= \int_{\bbR^2}  \frac{(x-y)^\perp}{|x-y|^{2+2\alpha}} \, \tht(y) dy = \int_{\bbR^2}  \tht(x-y) \, \frac{y^\perp}{|y|^{2+2\alpha}}  dy,
\end{equation}
provided we extend $\tht$ oddly onto $\bbR\times\bbR^-$.  Thus for any $x\in\bbR\times\bbR^+$ and  $h\in[0,1]$ we have
\begin{align*}
|u(x+he_1)-u(x)| 
& \le \int_{B_2(0)} \frac{\| \partial_{x_1} \tht \|_{L^\infty} h}{|y|^{1+2\alpha}}  dy 
+ \int_{\bbR^2\setminus B_2(x)} Ch |\tht(y)| dy + \int_{B_{2+h}(x)\setminus B_{2-h}(x)} |\tht(y)| dy \\
& \le C_\alpha  (\| \partial_{x_1} \tht \|_{L^\infty} + \|  \tht \|_{L^1} + \| \tht \|_{L^\infty})h,
\end{align*}
where the first integral is from the second integral in \eqref{2.4}, the second integral is from the first integral in \eqref{2.4} (in it we used that $|\nabla_y \frac{(x-y)^\perp}{|x-y|^{2+2\alpha}}|\le C$ when $y\notin B_1(x)$), and the third integral bounds the error in the evaluation of $u(x+he_1)$  that was introduced by adding them.
This now yields \eqref{2.1} as well as
\begin{equation} \lb{2.4a}
\|\partial_{x_1} u_2 \|_{L^\infty}  \le C_\alpha  (\| \partial_{x_1} \tht \|_{L^\infty} + \|  \tht \|_{L^1 \cap L^\infty}).
\end{equation}

Next assume that $x_2\in(0,1)$ and note that oddness of $\tht$ in $x_2$ yields
\begin{align*}
u_2(x+he_1)-u_2(x) &= \int_{\bbR^2}  (\tht(x-y)- \tht(x+he_1-y)) \, \frac{y_1}{|y|^{2+2\alpha}}  dy\\
& =  \int_{\bbR\times (-\infty,x_2]} (\tht(x-y)- \tht(x+he_1-y)) \left( \frac{y_1}{|y|^{2+2\alpha}} - \frac{y_1}{|y-2x_2e_2|^{2+2\alpha}} \right) dy.
\end{align*}
Hence
\begin{align*}
|u_2(x+he_1) & -u_2(x)| 
 \le   \int_{[-x_2,x_2]^2} \| \partial_{x_1} \tht \|_{L^\infty}  h  \frac{|y_1|}{|y|^{2+2\alpha}}  dy \\
& +  \int_{\bbR\times(-\infty,x_2] \setminus [-x_2,x_2]^2} \| \partial_{x_1} \tht \|_{L^\infty}  h |y| C_\alpha x_2 |y|^{-3-2\alpha} dy  \le C_\alpha \| \partial_{x_1} \tht \|_{L^\infty}  h x_2^{1-2\alpha}.
\end{align*}
This and \eqref{2.4a} imply \eqref{2.1a}.

We used neither information about $\partial_{x_2}\tht$ nor oddness of $\tht$ in $x_2$ in the proof of \eqref{2.1}, which will also be the case in the proof of \eqref{2.2}.  For any $x\in\bbR\times\bbR^+$ (without loss assume $x_1=0$) and  $h\in[0,\min\{1,\|  \tht \|_{L^\infty} \| \partial_{x_1} \tht \|_{L^\infty}^{-1}\}]$ we have
\[
u_2(x+he_2)-u_2(x) 
=  \int_{x+[-2,2]^2}  y_1\left( \frac{1}{|x+he_2-y|^{2+2\alpha}} - \frac{1}{|x-y|^{2+2\alpha}} \right) \tht(y) dy + A_1
\]
for some $|A_1|\le Ch\|\tht\|_{L^1}$.  The integral above equals
\begin{align*}
\int_{x+[0,2]\times[-2,2]}  y_1\left( \frac{1}{|x+he_2-y|^{2+2\alpha}} - \frac{1}{|x-y|^{2+2\alpha}} \right) (\tht(y)-\tht(-y_1,y_2)) dy.
\end{align*}
The parenthesis has the same sign as $y_2-x_2-\frac h2$ so absolute value of this integral is at most
\begin{align*}
& \int_{x+[0,2]\times[-2,2]}  y_1\left( \frac{1}{|x+he_2-y|^{2+2\alpha}} - \frac{1}{|x-y|^{2+2\alpha}} \right) 
{\rm sgn} \left(y_2-x_2-\frac h2  \right) 2M(y_1) dy \\
& = \int_{x+[-2,2]^2}  y_1\left( \frac{1}{|x+he_2-y|^{2+2\alpha}} - \frac{1}{|x-y|^{2+2\alpha}} \right) {\rm sgn} \left(y_1 \left(y_2-x_2-\frac h2 \right) \right) M(y_1) dy,
\end{align*}
where $M(s):=\min\{|s|\, \| \partial_{x_1} \tht \|_{L^\infty}, \| \tht \|_{L^\infty}\}$.
But this is precisely $u_2(x+he_2)-u_2(x) $ when $\tht$ is replaced by
\[
\tht^*(y):=\chi_{x+[-2,2]^2}(y) \,{\rm sgn} \left(y_1 \left(y_2-x_2-\frac h2 \right) \right) M(y_1)
\]
(we will denote $u$ corresponding to $\tht^*$ by $u^*$; also note that $\tht^*$ is not odd in $y_2$).  Since this is constant in $y_2$ between the jumps at $y_2=x_2-2,x_2+\frac h2, x_2+2$, \eqref{2.4} now yields
\begin{align*}
u_2^*(x+he_2)-u_2^*(x) &=  \int_{\bbR^2} ( \tht^*(x-y)- \tht^*(x+he_2-y) ) \, \frac{y_1}{|y|^{2+2\alpha}}  dy \\
& = 2 \int_{[-2,2]\times[-h/2,h/2]} \tht^*(x-y) \, \frac{y_1}{|y|^{2+2\alpha}}  dy +A_2,
\end{align*}
where $|A_2|\le 8h \|\tht^* \|_{L^\infty} \le 8h\| \tht \|_{L^\infty}$.  Let $m:=\min\{  \|  \tht \|_{L^\infty}  \| \partial_{x_1} \tht \|_{L^\infty}^{-1},2\}\ge h$. Then the last integral equals
\begin{align*}
 \int_{[-2,2]\times[-h/2,h/2]} & M(y_1) \, \frac{|y_1|}{|y|^{2+2\alpha}}  dy \\
& \le \int_{B_{h}(0)} \frac{\| \partial_{x_1} \tht \|_{L^\infty} }{|y|^{2\alpha}}  dy 
+ 2h \int_h^{m} \| \partial_{x_1} \tht \|_{L^\infty} \frac{1}{y_1^{2\alpha}} dy_1 
+ 2h \int_{m}^2 \|  \tht \|_{L^\infty} \frac{1}{y_1^{1+2\alpha}} dy_1 \\
& \le C_\alpha \| \partial_{x_1} \tht \|_{L^\infty} h^{2-2\alpha} +  C_\alpha \| \partial_{x_1} \tht \|_{L^\infty} m^{1-2\alpha} h
+  C_\alpha \|  \tht \|_{L^\infty}^{1-2\alpha} \| \partial_{x_1} \tht \|_{L^\infty}^{2\alpha} h\\
&\le  C_\alpha ( \| \partial_{x_1} \tht \|_{L^\infty} + \|  \tht \|_{L^\infty}) h,
\end{align*}
where we used the fact that the last integral is zero if $m=2$.  
We thus obtain
\[
|u_2(x+he_2)-u_2(x) | \le C_\alpha ( \| \partial_{x_1} \tht \|_{L^\infty} + \|  \tht \|_{L^\infty}) h+|A_1|+|A_2|  \le C_\alpha  (\| \partial_{x_1} \tht \|_{L^\infty} + \|  \tht \|_{L^1} + \| \tht \|_{L^\infty})h,
\]
and \eqref{2.2} follows.

It remains to prove \eqref{2.3}.  For any $x\in\bbR\times\bbR^+$ (without loss assume again $x_1=0$) and  $h\in[0,\min\{1,\frac{x_2}3\}]$ we have similarly to the proof of \eqref{2.1},
\begin{align*}
u_1(x+he_2)-u_1(x) &= \int_{[-2,2]^2} (\tht(x+he_2-y)-\tht(x-y))  \frac{y_2}{|y|^{2+2\alpha}}  dy \\ 
&+ \int_{\bbR^2\setminus (x+[-2,2]^2)} \left( \frac{y_2}{|x+he_2-y|^{2+2\alpha}} - \frac{y_2}{|x-y|^{2+2\alpha}} \right) \tht(y) dy +A_3
\end{align*}
for some $|A_3|\le 8h\|\tht\|_{L^\infty}$.  Letting $A_4$ be the second integral above, we have $|A_4|\le Ch\|\tht\|_{L^1}$, so let $I$ be the first integral above and let 
\[
M:= \| \min\{x_2^\beta,1\} \partial_{x_2} \tht \|_{L^\infty}.
\]
If $x_2\ge 3$, then clearly $|I|\le C_\alpha h M$.  Otherwise we can use $2 |x_2+h-y_2| \ge |x_2-y_2|$ for $y_2\notin[x_2,x_2+2h]$ to estimate
\beq \lb{2.5}
|I|  \le \int_{[-2,2]^2} 2hM |x_2-y_2|^{-\beta} \frac{|y_2|}{|y|^{2+2\alpha}}  dy + \int_{[-2,2]\times [x_2,x_2+2h]} 2\|\tht\|_{L^\infty} \frac{|y_2|}{|y|^{2+2\alpha}}  dy.
\eeq
The first integral is no more than
\[
C_\alpha hM \int_{[-2,2] }  |x_2-y_2|^{-\beta} |y_2|^{-2\alpha}  dy_2 \le C_{\alpha,\beta} hM \max\{x_2^{1-2\alpha-\beta},1\}.
\]
The second integral in \eqref{2.5} can be estimated by
\begin{align*}
 2 \|\tht\|_{L^\infty} \left( 4h \int_{x_2}^2 \frac {2x_2}{y_1^{2+2\alpha}} dy_1 + 4hx_2\left(\frac{x_2}3\right)^{-1-2\alpha}  \right)
\le C_\alpha h  \|\tht\|_{L^\infty} x_2^{-2\alpha}.
\end{align*}
So we obtain
\[
|u_1(x+he_2)-u_1(x)| \le  C_{\alpha,\beta} M (1+\min\{x_2,1\}^{-2\alpha+1-\beta}) h + C_\alpha \left( \|\tht\|_{L^\infty} \max\{x_2^{-2\alpha},1\} + \|\tht\|_{L^1} \right) h
\]
and \eqref{2.3} follows.
\end{proof}

Lemmas \ref{L.2.0} and \ref{L.2.1} mean that for $\alpha\in(0,\frac 14]$ and $\beta\in[2\alpha,1-2\alpha]$, solutions to  \eqref{1.1} satisfy the a priori estimate
\beq\lb{2.6}
\frac d{dt} \|\tht(t,\cdot)\|_{X_\beta} \le C_{\alpha,\beta} \|\tht(t,\cdot)\|_{X_\beta}^2.
\eeq
To see this, let $\kappa_\beta(x_2):=\min\{x_2^\beta,1\}$, so that $\lambda_\beta$ from \eqref{1.3a} satisfies
\beq\lb{2.9}
\lambda_\beta'(x_2)=\kappa_\beta(\lambda_\beta(x_2))
\eeq
on $(0,\infty)$.
If  $\tht$ solves \eqref{1.1}--\eqref{1.1a} and we let
\beq\lb{2.8}
\tilde u(t,x_1,x_2):=  \left( u_1 (t,x_1, \lambda_\beta(x_2) ), \kappa_\beta(\lambda_\beta(x_2))^{-1} u_2 (t,x_1, \lambda_\beta(x_2) ) \right),
\eeq
then \eqref{2.9} shows that we have
\beq\lb{2.7}
\partial_{t} \tilde\tht + \tilde u\cdot\nabla \tilde\tht =0
\eeq
on $\bbR\times\bbR^+$.  Note that the dynamic of this PDE preserves the measure $\lambda_\beta'(x_2)dx_1dx_2$.
We now obtain
\beq \lb{2.7a}
\begin{split}
\partial_{x_1} \tilde u_1(t,x_1,x_2) &= \partial_{x_1} u_1(t,x_1,\lambda_\beta(x_2)), \\
\partial_{x_1} \tilde u_2(t,x_1,x_2) &= \kappa_\beta(\lambda_\beta(x_2))^{-1} \partial_{x_1} u_2(t,x_1,\lambda_\beta(x_2)), \\
\partial_{x_2} \tilde u_1(t,x_1,x_2) &= \kappa_\beta(\lambda_\beta(x_2)) \,\partial_{x_2} u_1(t,x_1,\lambda_\beta(x_2)), \\
\partial_{x_2} \tilde u_2(t,x_1,x_2) &= \partial_{x_2} u_2(t,x_1,\lambda_\beta(x_2)) - \frac {\kappa_\beta'(\lambda_\beta(x_2))}{\kappa_\beta(\lambda_\beta(x_2))} u_2(t,x_1,\lambda_\beta(x_2)),
\end{split}
\eeq
where we also have
\beq \lb{2.7b}
\frac {\kappa_\beta'(\lambda_\beta(x_2))}{\kappa_\beta(\lambda_\beta(x_2))} =
\begin{cases}
 \beta \lambda_\beta(x_2)^{-1} & x_2\in(0,(1-\beta)^{-1}),\\ 
 0 & x_2\ge (1-\beta)^{-1}.
\end{cases}
\eeq
Hence Lemma \ref{L.2.1}, the definitions of $\kappa_\beta,\lambda_\beta$, and $ \| \kappa_\beta(x_2) \partial_{x_2} \tht (t,\cdot) \|_{L^\infty}  = \| \partial_{x_2} \tilde \tht (t,\cdot) \|_{L^\infty} $ yield
\beq\lb{2.13}
\|\nabla \tilde u(t,\cdot)\|_{L^\infty} \le C_{\alpha,\beta}  ( \| \nabla \tilde \tht (t,\cdot)\|_{L^\infty} + \|  \tht(t,\cdot) \|_{L^1 \cap L^\infty}).
\eeq
Indeed, this follows using $\beta\in[2\alpha,1-2\alpha]$ and 
\beq \lb{2.14}
\kappa_\beta(\lambda_\beta(x_2))=(1-\beta)^{\beta/(1-\beta)} x_2^{\beta/(1-\beta)} 
\eeq
for $x_2\in [0,(1-\beta)^{-1}]$,
 as well as
\beq \lb{2.15}
| u_2 (t,x)|  \le C_\alpha  ( \| \partial_{x_1} \tht (t,\cdot)\|_{L^\infty} + \|  \tht(t,\cdot) \|_{L^1 \cap L^\infty}) \, x_2,
 \eeq
 which holds by \eqref{2.2} and the last claim in Lemma \ref{L.2.0}.
 
 Since $ \|  \tht(t,\cdot) \|_{L^1}$ and  $\| \tht (t,\cdot)\|_{L^\infty}$ stay constant in time because \eqref{1.1} is a transport equation with divergence-free velocity, \eqref{2.13} shows that
 \[
 \frac d{dt} \| \nabla \tilde \tht(t,\cdot)\|_{L^\infty} \le C_{\alpha,\beta} \| \nabla \tilde \tht(t,\cdot)\|_{L^\infty} \left(  \| \nabla \tilde\tht(t,\cdot)\|_{L^\infty} +  \|  \tht_0 \|_{L^1 \cap L^\infty} \right).
 \]
 This now yields \eqref{2.6}.
 
 Having the a priori estimate \eqref{2.6}, it is  easy to construct local solutions  in $X_\beta$ and show their uniqueness.  One can for instance approximate the Biot-Savart kernel $\frac{y^\perp}{|y|^{2+2\alpha}}$ by a sequence of regularized kernels,
(e.g., by multiplying it by a smooth approximation of $\chi_{\bbR^2\setminus B_{1/n}(0)}$), 
 construct solutions to these regularized equations, and then recover the solution to \eqref{1.1} in the limit  (because \eqref{2.6} equally holds for solutions to all the regularizations).  Uniqueness then follows from \eqref{2.7} and \eqref{2.13}.  
 This proves Theorem \ref{T.1.1}(i) because \eqref{2.6} and constancy of $ \|  \tht(t,\cdot) \|_{L^1}$ and  $\| \tht (t,\cdot)\|_{L^\infty}$  show that the unique solution can only cease to exist at some time $T$ when $\lim_{t\to T_{\tht_0}} \|\tht(t,\cdot)\|_{\dot W^{1,\infty}_\beta}=\infty$, as well as that such a time is bounded below by some function of $\alpha,\beta$, and $\|\tht_0\|_{X_\beta}$ (which can clearly be chosen to be decreasing in the last argument).

 \section{Proof of Theorem \ref{T.1.2}} \lb{S3}
 
 Since the proof of Theorem \ref{T.1.1}(ii) is completely independent of this section, a reader interested in the singularity result can first read Section \ref{S4}.
 
 All constants in this section may depend on $\alpha$.
 We split the analysis into two cases: $\beta\in[0,2\alpha)$ and $\beta\in (1-2\alpha,1)$.
 In the first we provide initial data in $X_\beta$ for which no solutions exist in $L^\infty ([0,T];X_\beta)$ for any $T>0$.  In the second case
we construct a sequence of initial data that converges in $X_\beta$ but no sequence of solutions for these data can converge to a solution for the limiting datum in $L^\infty ([0,T];X_\beta)$ for any $T>0$.
 
 \bigskip\noindent
{\bf Case $\beta\in[0,2\alpha)$.}
For $x\in\bbR\times\bbR^+$, let 
 \[
 \phi_0(x):= \min\left\{ 1, d \big(x,\bbR^2\setminus ( [-3,3]\times[-1,2+(1-\beta)^{-1}]) \big) \right\},
 \]
  with $d$ the distance on $\bbR^2$, and let $\phi_1(x):=\max\{0,1-|x|\}$.  
With some $a_n\in(0,2^{-4n-1}]$ ($n=1,2,\dots$), and 
 with $\lambda_\beta$ from \eqref{1.3a} and $\Lambda_\beta(x):=(x_1,\lambda_\beta(x_2))$, define
\beq \lb{3.0}
 \tilde\tht_0 := \phi_0 + \phi_\infty
 \qquad\text{and}\qquad \tht_0  :=  \tilde \tht_0 \circ \Lambda_\beta^{-1} \in X_\beta,
\eeq
where
\beq \lb{3.1}
\phi_\infty(x):= \sum_{n= 1}^\infty a_n\phi_1\left( \frac{x-(0,2^{-4n})}{a_n} \right).
\eeq

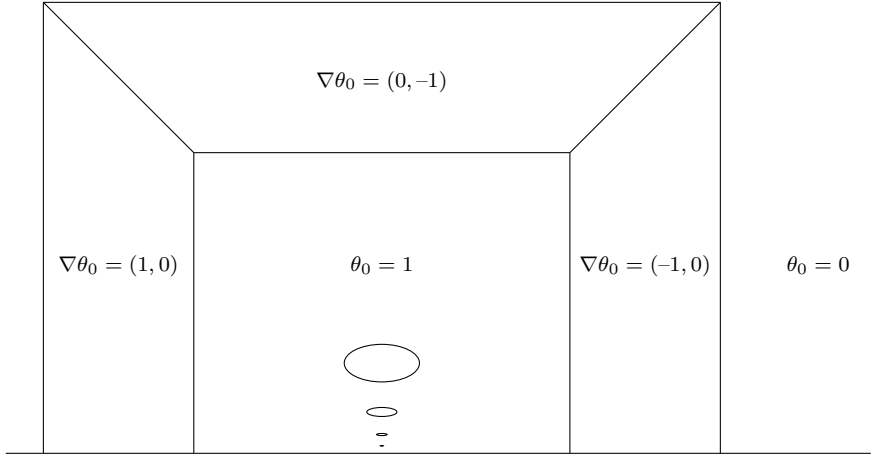
\begin{figure} 
    \pgfdeclarelayer{drawing layer}
    \pgfdeclarelayer{label layer}
    \pgfsetlayers{drawing layer,label layer}
    \begin{tikzpicture}

        \begin{pgfonlayer}{drawing layer}
            \draw (-5.5,0) to (6,0);
            \draw (-3,0) to (-3,4);
            \draw (-3,4) to (2,4);
            \draw (2,0) to (2,4);
            \draw (-5,0) to (-5,6);
            \draw (-5,6) to (4,6);
            \draw (4,0) to (4,6);
            \draw (-3,4) to (-5,6);
            \draw (2,4) to (4,6);

               \node[font=\tiny] (x) at (-4,2.5) {$\nabla \tht_0 = (1,0)$};
               \node[font=\tiny] (x) at (3,2.5) {$\nabla \tht_0 = (\text{--}1,0)$};
               \node[font=\tiny] (x) at (-0.5,4.95) {$\nabla \tht_0 = (0,\text{--}1)$};
               \node[font=\tiny] (x) at (5.3,2.5) {$\tht_0 = 0$};
               \node[font=\tiny] (x) at (-0.5,2.5) {$\tht_0 = 1$};
               
               \draw (-0.5,1.2) ellipse (0.5 and 0.25);
               \draw (-0.5,0.55) ellipse (0.2 and 0.06);
               \draw (-0.5,0.25) ellipse (0.07 and 0.014);
               \draw (-0.5,0.1) ellipse (0.02 and 0.003);

        \end{pgfonlayer}

    \end{tikzpicture}
    \caption{Function $\tht_0$ for $\beta<2\alpha$, with $\phi_\infty \circ \Lambda_\beta^{-1}$ supported on  the ``ellipses''.} \lb{F.3.1}
\end{figure}

It is easy to see that $\tilde \tht_0$ is Lipschitz with constant $1$  because  supports of the summands in the series above are disjoint, and are all contained in $[-1,1]\times(0,1]$.  We will now show that when the $a_n$ are small enough, a classical solution $\tht$ to \eqref{1.1} with initial datum $\tht_0$ would have to instantly leave $X_\beta$ due to $\tilde\tht(t,x):=\tht(t,\Lambda_\beta(x))$ not being Lipschitz on $[0,T]\times\bbR\times\bbR^+$ for any $T>0$.  Specifically, we will essentially obtain $\partial_{x_2} u_1(t,x) \sim x_2^{-2\alpha}$ for small $t,x_2>0$ and $x_1\sim 0$ (with the other three partial derivatives of $u$ sufficiently controlled), which will result in fast horizontal shearing of the ``cap'' functions from the  series in \eqref{3.1}  (and hence instantaneous loss of Lipschitz continuity of $\tilde \tht$).  

As before, extend $\tht_0,\phi_0,\phi_\infty$ oddly onto $\bbR^2$.
Let us first look at the velocity 
\[
v^0(x):= \int_{\bbR^2} \frac{(x-y)^\perp}{|x-y|^{2+2\alpha}}  \phi_0 (\Lambda_\beta^{-1}(y)) dy,
\]
generated by only $\phi_0 \circ \Lambda_\beta^{-1}$.
 Lemma \ref{L.2.1} shows that $\partial_{x_1} v^0_1 $  and $ \partial_{x_2} v^0_2 $ are bounded on $\bbR\times\bbR^+$.  Moreover, from
 \begin{align*}
\partial_{x_1} v^0_2(x) &= - \int_{\bbR^2}  \frac d{dx_1} \phi_0 (\Lambda_\beta^{-1}(x-y)) \, \frac{y_1}{|y|^{2+2\alpha}}  dy \\
 &= \int_{\bbR\times (-\infty,x_2]} \frac d{dx_1} \phi_0 (\Lambda_\beta^{-1}(x-y)) \left(   \frac{y_1}{|y-2x_2e_2|^{2+2\alpha}} - \frac{y_1}{|y|^{2+2\alpha}} \right) dy
 \end{align*}
  and constancy of $\phi_0 \circ \Lambda_\beta^{-1}$  on $[-2,2]\times (0,2]$ and on $[-2,2]\times [-2,0)$ (so the integrand  vanishes when $x\in[-1,1]\times(0,1]$ and $|y|\le 1$) we see that   $x_2^{-1}\partial_{x_1} v^0_2$ is  bounded on $[-1,1]\times (0,1]$.
 
 For the last partial derivative, we can use the same constancy of $\phi_0$ and 
 \[
 v^0_1(x+he_2)-v^0_1(x) = \int_{\bbR^2} \left[ \phi_0 (\Lambda_\beta^{-1}(x+he_2-y))-\phi_0 (\Lambda_\beta^{-1}(x-y)) \right]  \frac{y_2}{|y|^{2+2\alpha}}  dy
 \]
 to see that now instead 
\[
\partial_{x_2} v^0_1(x) - 2\int_{-1}^1\frac{x_2}{|(x_2,y_1)|^{2+2\alpha}}\, dy_1 
 \]
 is bounded on $[-1,1]\times (0,1]$ (because of the jump $\phi_0$ has at the $x_1$ axis).  This means that on this rectangle we have
  \beq \lb{3.2}
\partial_{x_2} v^0_1(x) \ge C^{-1} x_2^{-2\alpha} -  C
 \eeq
 for some constant $C\ge 1$, which we pick so we also have
  \beq \lb{3.3}
\max\left\{ \partial_{x_1} v^0_1 (x), \partial_{x_2} v^0_2(x), x_2^{-1}\partial_{x_1} v^0_2(x) \right\} \le C 
 \eeq
there.  We will now show that when $a_n$ above are small enough, then  these bounds will hold for a short time for the actual velocity $u$, yielding the desired shearing dynamic.

Lemma \ref{L.2.0} applies uniformly to all solutions $\tht$ as above (let us call the constant in it again $C$) because $\|\tht_0\|_{L^1\cap L^\infty}\le \frac {15}{1-\beta}$ regardless of our choice of $a_n$.  So consider  any divergence-free $u$ on  $[0,T]\times\bbR^2$ for some $T$, satisfying $u(t,x_1,-x_2)=(u_1(t,x),-u_2(t,x))$,
\beq \lb{3.4a}
\sup_{t\in[0,T]} \|u(t,\cdot)\|_{C^{1-2\alpha}(\bbR\times\bbR^+)}\le C,
\eeq
$\lim_{x\to (s,0)} u_2(t,x)= 0$ for each $(t,s)\in[0,T]\times \bbR$, and such that the left-hand sides of \eqref{2.1}--\eqref{2.3}  are all finite (with the $L^\infty$ norms taken over $[0,T]\times \bbR\times\bbR^+$).  Note that then $\|x_2^{-1} u_2\|_{L^\infty}$ is also finite, similarly to  \eqref{2.15}, and all these estimates show that $\frac d{dt} z^{t,x}=u(t,z^{t,x})$ and $z^{0,x}:=x$ has a unique solution on $[0,T]$ for each $x\in \bbR^2$ and  $z^{t,\cdot}$ is a measure preserving bijection on $\bbR\times\bbR^+$ (and so also on $\bbR\times\bbR^-$) for each $t\in[0,T]$.  If we let $z^{-t,\cdot}$ be its inverse, then the gSQG velocity 
  \beq \lb{3.3a}
v(t,x):= \int_{\bbR^2} \frac{(x-y)^\perp}{|x-y|^{2+2\alpha}}  \phi_0 (\Lambda_\beta^{-1}(z^{-t,y}))   dy
\eeq
corresponding to  the $u$-transported function $\phi_0 \circ \Lambda_\beta^{-1}$ satisfies
\[
\partial_{x_j} v(t,x) = p.v. \, \int_{\bbR^2}  \partial_{x_j} \frac{(x-y)^\perp}{|x-y|^{2+2\alpha}}  \, \phi_0 (\Lambda_\beta^{-1}(z^{-t,y})) dy .
\]
Since $\|u(t,\cdot)\|_{L^\infty}\le C$, we see that $\phi_0 (\Lambda_\beta^{-1}(z^{-t,y}))=\sgn(y_2)$  for $(t,y)\in[0,\frac 1{2C}]\times[-\frac 32,\frac 32]^2$.    We can then even remove ``p.v.'' in
\begin{align*}
\partial_{x_j} v(t,x) - \partial_{x_j} v^0(x) 
& = \int_{\bbR^2}  \partial_{x_j} \frac{(x-y)^\perp}{|x-y|^{2+2\alpha}}  \left[ \phi_0 (\Lambda_\beta^{-1}(z^{-t,y})) - \phi_0 (\Lambda_\beta^{-1}(y)) \right] dy \\
& = \int_{\bbR^2 \setminus B_{1/2}(x)}  \partial_{x_j} \frac{(x-y)^\perp}{|x-y|^{2+2\alpha}}  \left[ \phi_0 (\Lambda_\beta^{-1}(z^{-t,y})) - \phi_0 (\Lambda_\beta^{-1}(y)) \right] dy
\end{align*}
when $(t,x)\in [0,\frac 1{2C}]\times [-1,1]\times (0,1]$.  From this, \eqref{3.2}, and \eqref{3.3}  we see that after increasing $C$ (it still only depends on $\alpha$), we obtain
  \beq \lb{3.5}
\partial_{x_2} v_1(t,x) \ge C^{-1} x_2^{-2\alpha} -  C 
 \eeq
and
  \beq \lb{3.6}
\max\left\{ \partial_{x_1} v_1 (t,x), \partial_{x_2} v_2(t,x), x_2^{-1}\partial_{x_1} v_2(t,x) \right\} \le C 
 \eeq
for $(t,x)\in [0,\min\{\frac 1{2C},T\}]\times  [-1,1]\times (0,1]$, where the bound on $x_2^{-1}\partial_{x_1} v_2(t,x)$ also uses
\[
\partial_{x_1} v_2(t,x) - \partial_{x_1} v_2^0(x) 
= \int_{\bbR\times (-\infty,x_2]}  \left(  \partial_{x_1} \frac{x_1-y_1}{|x-y|^{2+2\alpha}} -  \partial_{x_1} \frac{x_1-y_1}{|\bar x-y|^{2+2\alpha}} \right)  \left[  \phi_0 (\Lambda_\beta^{-1}(y)) - \phi_0 (\Lambda_\beta^{-1}(z^{-t,y}))\right] dy.
\]


Let us now assume that $\tht\in L^\infty([0,T];X_\beta)$ is  a classical solution to \eqref{1.1} with initial datum given by \eqref{3.0} and \eqref{3.1}, with some $T\in(0,1]$ and some $a_n$ to be determined later.  That is,  $\tilde \tht(t,x):=\tht(t,\Lambda_\beta(x))$ is Lipschitz in $x$ with some constant $L$.  We extend $\tht$  oddly onto $[0,T]\times \bbR^2$, let 
\beq \lb{3.00}
u(t,x):= \int_{\bbR^2} \frac{(x-y)^\perp}{|x-y|^{2+2\alpha}}  \tht(t,y) dy
\eeq
be the velocity corresponding to $\tht$, and define $z^{t,x}$ as above.
Now $\tht(t,x)=\tht_0(z^{-t,x})$ shows that $u=v+w$, where $v$ is from \eqref{3.3a} and 
  \[
w(t,x)
:= \int_{\bbR^2}  \frac{(x-y)^\perp}{|x-y|^{2+2\alpha}} \, \phi_\infty ( \Lambda_\beta^{-1}(z^{-t,y}) )  dy.
\]
Since \eqref{3.5} and \eqref{3.6}
hold by Lemma \ref{L.2.0} and will yield the desired shearing dynamic, we only need to show that $w$ will not distort it too much at small times when the $a_n$ are small.  

From \eqref{2.15} we see that there is  $T_\tht\in(0,\min\{\frac 1{2C},T\}]$ (depending on $\alpha, T,L$) such that
 \beq \lb{3.7}
 z^{t,x}_2x_2^{-1}\in[2^{-1},2] 
 \eeq
 for all $(t,x)\in [0,T_\tht]\times\bbR\times\bbR^+$.
   This, $(\lambda_\beta^{-1})'\ge 1$, and $z^{-t,\cdot}$ being measure-preserving show that there is $C'<\infty$ such that if $x,x'\in \bbR\times (0,\lambda_\beta(2^{1-4N})]$ for some $N\ge 1$, then
\beq \lb{3.10}
|w(t,x)-w(t,x')| \le C' \sum_{n= 1}^{N-1} a_n^3 \lambda_\beta(2^{-4n-3})^{-2-2\alpha} |x-x'| 
+ C' \sum_{n= N}^{\infty} a_n^{2-2\alpha}
\eeq
 for all $t\in[0,T_\tht]$.  This is because  $y_2\ge \lambda_\beta(2^{-4n-2})$  when  $\Lambda_\beta^{-1}(z^{-t,y}) \in B_{2^{-4n-1}}((0,2^{-4n}))$, due to \eqref{3.7} and convexity of $\lambda_\beta$, which means that
 \[
 \min\{ |x-y|, |x'-y| \} \ge \lambda_\beta(2^{-4n-2})-\lambda_\beta(2^{-4n-3})\ge \lambda_\beta(2^{-4n-3})
 \]
for such $y$ when $n\le N-1$, while for $n\ge N$ we simply used the bound
  \[
\left| \int_{\bbR^2}  \frac{(x-y)^\perp}{|x-y|^{2+2\alpha}} \, a_n \phi_1 \left(  \frac{\Lambda_\beta^{-1}(z^{-t,y})-(0,2^{-4n})}{a_n}\right)  dy \right| \le a_n \int_{B_{a_n}(0)} |y|^{-1-2\alpha}dy \le C'' a_n^{2-2\alpha}.
\]
  
  If we pick $a_n:=a 2^{-8n/(1-2\alpha)^2}$ with small enough $a\in(0,(1-\beta)^{1/(1-\beta)}]$ (note that then also $a_n\le \lambda_\beta(2^{-4n-1})$ because $0\le \beta< 2\alpha<1$), then for any $N\ge 1$ we obtain
 \beq \lb{3.7a}
 |w(t,x)-w(t,x')| \le |x-x'| + a_N^{(3-2\alpha)/2}
 \eeq
 whenever $t\in[0,T_\tht]$ and $x,x'\in \bbR\times (0,\lambda_\beta(2^{1-4N})]$ (we decreased the power of $a_N$ here just to remove $C'$).  In fact,  when $N=1$ we can clearly even take any $x,x'\in \bbR\times\bbR^+$.
 
 We will also need a better estimate on $w_2(t,x)-w_2(t,x')$ when $x_2=x_2'$. Oddness of $\phi_\infty ( \Lambda_\beta^{-1}(z^{-t,y}) )$ in $y_2$ shows that the contribution of the $n^{\rm th}$ summand in \eqref{3.1} and of its reflection across the $x_1$ axis  to $\partial_{x_1} w_2(t,x)$ is
 \[
\left| \int_{\bbR^2} \left( \partial_{x_1} \frac{y_1-x_1 }{|x-y|^{2+2\alpha}} - \partial_{x_1} \frac{y_1-x_1 }{|\bar x-y|^{2+2\alpha}}\right) a_n \phi_1 \left(  \frac{\Lambda_\beta^{-1}(z^{-t,y})-(0,2^{-4n})}{a_n}\right)  dy \right|
 \le C'' a_n^3 \lambda_\beta(2^{-4n-3})^{-3-2\alpha} x_2
 \]
 when $n\le N-1$ and $x\in \bbR\times (0,\lambda_\beta(2^{1-4N})]$.  Then, similarly to  \eqref{3.7a}, we obtain
 \[
  |w_2(t,x)-w_2(t,x')| \le x_2 |x-x'| + a_N^{(3-2\alpha)/2}
  \]
when also $x_2=x_2'$ (and $a>0$ is small enough), which combines with \eqref{3.7a} to show that
 \beq \lb{3.11}
 |w_2(t,x)-w_2(t,x')| \le |x_2-x'_2| + \max\{x_2,x_2'\}|x_1-x_1'| +  2a_N^{(3-2\alpha)/2}
 \eeq
  whenever $t\in[0,T_\tht]$ and $x,x'\in \bbR\times (0,\lambda_\beta(2^{1-4N})]$.
 
From \eqref{3.5}, \eqref{3.6}, \eqref{3.7a}, and \eqref{3.11} we see that for any $n\ge 1$, and all $t\in[0,T_\tht]$ and  $x,x'\in [-1,1]\times (0,\lambda_\beta(2^{1-4n})]$ with $x_2\ge x'_2$, we have the bounds
 \beq \lb{3.12}
u_1(t,x) - u_1(t,x')\ge C^{-1}x_2^{-2\alpha}(x_2-x_2') -  3C |x-x'| - a_n^{(3-2\alpha)/2}
\eeq
and 
 \beq \lb{3.13}
|u_2(t,x) - u_2(t,x')| \le 3C |x_2-x'_2| +3Cx_2 |x_1-x_1'| + 2a_n^{(3-2\alpha)/2}.
\eeq
We now take $y_n:=(0,\lambda_\beta(2^{-4n}))$ and $y_n':=(a_n,\lambda_\beta(2^{-4n})-2b_n)$, with $b_n:=2^{-2(2\alpha+\beta)n/(1-\beta)} a_n  $.  
From \eqref{3.7} we see that for $t\in[0,T_\tht]$ we have
\[
\max\{ z^{t,y_n}_2,  z^{t,y_n'}_2 \} \le 2 \lambda_\beta(2^{-4n}) \le \lambda_\beta(2^{1-4n}),
\]
 so \eqref{3.12} and \eqref{3.13} hold with $(x,x')=(z^{t,y_n},  z^{t,y_n'})$ when $t\in[0,T_\tht]$.
 Let 
 \[
 T_n:=\min\{ T_\tht, T_n', T_n''\},
 \]
with $T'_n \ge 0$ the first time when $z^{T'_n,y_n}_1=z^{T'_n,y_n'}_1$  and $T''_n \ge 0$  the first time when 
$z^{T''_n,y_n}_2 - z^{T''_n,y_n'}_2\notin (b_n, 3b_n)$
(we let $T_n':=\infty$ resp.~$T_n'':=\infty$ if there is no such time).
From \eqref{3.12} we see that
 \beq \lb{3.14}
u_1(t,z^{t,y_n}) - u_1(t,z^{t,y_n'})\ge (2C)^{-1} \lambda_\beta(2^{1-4n})^{-2\alpha} b_n \qquad\text{and}\qquad   z^{t,y_n'}_1 - z^{t,y_n}_1\in[0,a_n]
\eeq
when $t\in[0,T_n]$ and $n$ is large enough because $b_n\ge  2C \lambda_\beta(2^{1-4n})^{2\alpha} (3C (a_n+3b_n)+ a_n)$ for large $n$ (note that $b_n\le a_n$).
Then \eqref{3.14} yields
\beq \lb{3.15}
T_n\le a_n 2C \lambda_\beta(2^{1-4n})^{2\alpha}  b_n^{-1} \le  2^{1+1/(1-\beta)} C 2^{-2(2\alpha-\beta)n/(1-\beta)} \to 0,
\eeq
so $T_n<T_\tht$ for large $n$.  Moreover, if $T_n=T_n''$ for large enough $n$, then \eqref{3.13} and \eqref{3.14} yield
\[
T_n \ge   b_n \left( 9Cb_n + 3C \lambda_\beta(2^{1-4n}) a_n +  2a_n^{(3-2\alpha)/2} \right)^{-1} \to (9C)^{-1},
\]
which cannot hold for any large enough $n$ by \eqref{3.15}.  Hence $T_n=T_n'$ for all large $n$, which means that
$|z^{T_n,y_n} - z^{T_n,y_n'}|\le 3b_n$.
Since 
\[
\tilde\tht(T_n,\Lambda_\beta^{-1}(z^{T_n,y_n}))=\tht_0(y_n)=1+a_n \qquad\text{and}\qquad \tilde\tht(T_n,\Lambda_\beta^{-1}(z^{T_n,y_n'}))=\tht_0(y_n')=1,
\]
we see from this and \eqref{3.7} that a Lipschitz constant for $\tilde \tht(T_n,\cdot)$ on $\bbR\times\bbR^+$ is no less than 
\[
a_n \left( 3b_n\lambda_\beta'(2^{-4n-1})^{-1} \right)^{-1} = 3^{-1} (2-2\beta)^{\beta/(1-\beta)} 2^{2(2\alpha-\beta)n/(1-\beta)} \to\infty
\]
for all large $n$.  But this contradicts our hypothesis that
 $\tilde\tht$ is  Lipschitz in $x$ on $[0,T]\times\bbR\times\bbR^+$.
The proof for $\beta\in[0,2\alpha)$ is thus finished. 

\bigskip\noindent
{\bf Case $\beta\in (1-2\alpha,1)$.}
We use here a similar approach as for $\beta<2\alpha$, 
but it will be more complicated due the fact that this time the supports of $\phi_\infty$ and $\nabla \phi_0$ will have to touch.
Since now $\beta\le 1-2\alpha$ does not hold, the bound \eqref{2.1a} will be too weak to control $\nabla\tilde u$, and so the fast shearing dynamic will be happening in the vertical direction.  However, \eqref{2.1a} in fact shows that this dynamic will only be fast in the scaling of $\tilde \tht$, and it will therefore be sufficient (as well as convenient) to let the background $\phi_0$ be Lipschitz on all of $\bbR^2$ in the scaling of $\tht$.   So now let 
 \[
 \phi_0(x):= \min\left\{ 1, d \big( x,\bbR^2\setminus ([-3,0]\times [0,3]) \big) \right\}
 \]
for $x\in\bbR\times\bbR^+$, let again $\phi_1(x):=\max\{0,1-|x|\}$, 
and with some $a_n\in(0,2^{-4n-1}]$ 
let
\beq \lb{3.20}
 \tilde\tht_0 := \phi_0 \circ \Lambda_\beta + \phi_\infty
 \qquad\text{and}\qquad \tht_0 :=  \tilde \tht_0 \circ \Lambda_\beta^{-1} =  \phi_0 + \phi_\infty \circ \Lambda_\beta^{-1} \in X_\beta,
\eeq
where now
\beq \lb{3.21}
\phi_\infty(x):= \sum_{n= 1}^\infty \frac{a_n}n \phi_1\left( \frac{x-(\lambda_\beta(2^{-4n}),2^{-4n})}{a_n} \right).
\eeq

\begin{figure} 
    \pgfdeclarelayer{drawing layer}
    \pgfdeclarelayer{label layer}
    \pgfsetlayers{drawing layer,label layer}
    \begin{tikzpicture}

        \begin{pgfonlayer}{drawing layer}
            \draw (-6,0) to (3.5,0);
            \draw (-3.5,2) to (-3.5,4.5);
            \draw (-3.5,4.5) to (-1,4.5);
            \draw (-1,2) to (-1,4.5);
            \draw (-5.5,0) to (-5.5,6.5);
            \draw (-5.5,6.5) to (1,6.5);
            \draw (1,0) to (1,6.5);
            \draw (-3.5,4.5) to (-5.5,6.5);
            \draw (-1,4.5) to (1,6.5);
            \draw (-3.5,2) to (-5.5,0);
            \draw (-1,2) to (1,0);     
            \draw (-3.5,2) to (-1,2);                   

               \node[font=\tiny] (x) at (-4.5,3.25) {$\nabla \tht_0 = (1,0)$};
               \node[font=\tiny] (x) at (0,3.25) {$\nabla \tht_0 = (\text{--}1,0)$};
               \node[font=\tiny] (x) at (-2.25,5.45) {$\nabla \tht_0 = (0,\text{--}1)$};
               \node[font=\tiny] (x) at (2.3,3.25) {$\tht_0 = 0$};
               \node[font=\tiny] (x) at (-2.25,3.25) {$\tht_0 = 1$};
               \node[font=\tiny] (x) at (-2.25,1) {$\nabla \tht_0 = (0,1)$};
               
               \draw (2.2,1.2) ellipse (0.5 and 0.25);
               \draw (1.55,0.55) ellipse (0.2 and 0.06);
               \draw (1.25,0.25) ellipse (0.07 and 0.014);
               \draw (1.1,0.1) ellipse (0.02 and 0.003);

        \end{pgfonlayer}

    \end{tikzpicture}
    \caption{Function $\tht_0$ for $\beta>1-2\alpha$, with $\phi_\infty \circ \Lambda_\beta^{-1}$ supported on the ``ellipses''.}  \lb{F.3.2}
\end{figure}
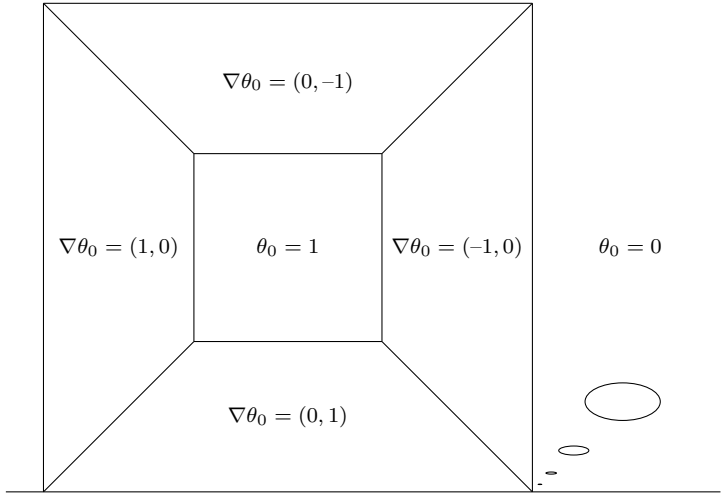

Then $\tilde \tht_0$ is again Lipschitz with constant $1$  because $\lambda_\beta'\le 1$.  We will show that when the $a_n$ are small enough, 
then a solution $\tht\in L^\infty_{\rm loc}([0,T_{\tht_0});X_\beta)$ to \eqref{1.1}  with initial datum $\tht_0$ cannot be a limit of  solutions $\tht_{n_0}$ obtained by truncating the sum in \eqref{3.21} at $n=n_0$  (whose initial data converge to $\tht_0$ in $X_\beta$) because the Lipschitz constants of the functions $\tilde\tht_{n_0}(t,x):=\tht_{n_0}(t,\Lambda_\beta(x))$ on $[0,T]\times\bbR\times\bbR^+$  diverge to $\infty$ for any $T>0$.
The reason for this will now be that $\partial_{x_1} u_2(t,x) \sim x_2^{1-2\alpha}$ for  small $t\ge 0$ and small $x_1\sim x_2>0$  (with the other three partial derivatives of $u$ sufficiently controlled), which together with \eqref{2.7a} will result in $\partial_{x_1} \tilde u_2(t,x)$ becoming arbitrarily large near the origin at small times and hence in fast vertical shearing of the ``cap'' functions from the  series in \eqref{3.21}.

Again extend $\tht_0,\phi_0,\phi_\infty$ oddly onto $\bbR^2$.  Then $\phi_0$ and $\phi_\infty$ are Lipschitz, and so the velocity 
\[
v^0(x):= \int_{\bbR^2} \frac{(x-y)^\perp}{|x-y|^{2+2\alpha}}  \phi_0 (y) dy,
\]
generated by only $\phi_0$ is also Lipschitz with some constant $C$ by the argument in the proof of \eqref{2.1}.
Moreover, the simple form of $\partial_{x_1} \phi_0$ and easy cancellations due to oddness in $x_2$  yield
 \[
\partial_{x_1} v^0_2(x) = - \int_{\bbR^2}  \partial_{x_1} \phi_0 (x-y) \, \frac{y_1}{|y|^{2+2\alpha}}  dy = \int_{x_1}^{x_1+1} \int_{x_1-y_1-x_2}^{x_1-y_1+x_2} \frac{y_1}{|y|^{2+2\alpha}} dy_2 dy_1 + A_1
 \]
 when $x_1,x_2\ge 0$, where $A_1$ is the integral of $\frac{y_1}{|y|^{2+2\alpha}}$ over  the region
 \[
 \{ y_1\in[x_1+2,x_1+3] \,\,\&\,\, y_2 \in [x_1-y_1-x_2,x_1-y_1+x_2] \}
 \]
 minus the integral of $\frac{y_1}{|y|^{2+2\alpha}}$ over
 \[
 \{ y_1\in[x_1,x_1+1] \cup [x_1+2,x_1+3] \,\,\&\,\, y_2 \in [y_1-x_1-3-x_2,y_1-x_1-3+x_2] \}.
 \]
 Since these regions are distance more than 1 from the origin and have total area $6x_2$, we get
 \beq \lb{3.22}
 \partial_{x_1} v^0_2(x)\ge x_2(C^{-1} |x|^{-2\alpha} - C)
 \eeq
for $x\in[0,1]^2$ and some $C\ge 1$.  As we noted above, we also have
  \beq \lb{3.23}
\| \nabla v^0 \|_{L^\infty} \le C.
 \eeq

 Lemma \ref{L.2.0} again applies uniformly to all solutions $\tht$ as above (let us call the constant in it again $C$).
 So consider  any divergence-free $u$ on  $[0,T]\times\bbR^2$ for some $T\in(0,\frac 1{10C}]$ satisfying $u(t,x_1,-x_2)=(u_1(t,x),-u_2(t,x))$,
\beq \lb{3.24}
\sup_{t\in[0,T]} \|u(t,\cdot)\|_{C^{1-2\alpha}(\bbR^2)}\le C,
\eeq
and such that the left-hand sides of \eqref{2.1}--\eqref{2.3}  are all finite (with the $L^\infty$ norms taken over $[0,T]\times \bbR\times\bbR^+$).   So $\|x_2^{-1} u_2\|_{L^\infty}$ is again finite and we can define $z^{t,x}$ as before.  Additionally, we let
 \[
 D_t:=\big\{  x\in \bbR^2 \,\big|\,  |x_2| \ge 3(x_1-z^{t,0}_1)   \big\}
 \]
and assume 
\beq \lb{3.23a}
\| \nabla u \|_{L^\infty \left( \bigcup_{t\in[0,T]} (\{t\}\times D_t) \right)} \le 3C
 \eeq
 as well as 
 \beq\lb{3.42}
\left| u(t,x)- \frac d{dt} z^{t,0} \right| \le 3C|x-z^{t,0}|
 \eeq
 for all $(t,x)\in[0,T]\times\bbR^2$.
Note that this implies 
\beq \lb{3.31}
 (1-3Ct) |x| \le  |z^{t,x}- z^{t,0}|\le (1+6Ct) |x| 
\eeq
for all $(t,x)\in[0,T]\times \bbR^2$, as well as
\beq\lb{3.23c}
z^{t,x}\in \big\{  x\in \bbR^2 \,\big|\,  |x_2| \tan(3Ct) \ge x_1-z^{t,0}_1   \big\} \subseteq D_t
\eeq
 for any $(t,x)\in [0,T]\times \bbR^-_0\times\bbR$ because $T\le \frac 1{10C}$ and the set in \eqref{3.23c}
is the complement of a cone in $\bbR^2$ with vertex at $z^{t,0}$, bisector $[z^{t,0}_1,\infty)\times\{0\}$, and interior half-angle shrinking with speed $3C$. 

Since $\supp\, \phi_0\subseteq \bbR^-_0\times\bbR$, it follows that $\phi_0(z^{-t,\cdot})$ (i.e., $\phi_0$ transported by $u$ for time $t$)  is Lipschitz with constant $\frac 1{1-3Ct} \le \frac 32$ for all $t\in[0,T]$.
Therefore the gSQG velocity 
  \beq \lb{3.24a}
v(t,x):= \int_{\bbR^2}  \phi_0 (z^{-t,x-y}) \,  \frac{y^\perp}{|y|^{2+2\alpha}}  dy
\eeq
corresponding to it  satisfies
\beq \lb{3.24b}
\| \nabla v \|_{L^\infty} \le 2C
 \eeq
(analogously to \eqref{3.23}).  We can also extend \eqref{3.22} to $v$ via
\beq \lb{3.33}
\partial_{x_1} v_2(t,x) - \partial_{x_1} v_2^0(x-z^{t,0}) =  \int_{\bbR^2} \left[ \partial_{x_1}\phi_0 (x-y-z^{t,0}) - \frac d{dx_1} \phi_0 (z^{-t,x-y}) \right]  \frac{y_1}{|y|^{2+2\alpha}} dy ,
\eeq
at least for some  $(t,x)$.
Note that $\supp\,\phi_0$ is composed of 10 trapezoids (2 of them are squares) such that $\nabla \phi_0$ is constant in the interior of each of them, with $|\nabla\phi_0|\in\{0,1\}$.  This and \eqref{3.23a} show that if $z^{-t,x-y}$ with $t\in[0,T]$ lies inside one of these trapezoids and $\partial_{x_1}\phi_0=a$ on that trapezoid, then $ | \frac d{dx_1} \phi_0(z^{-t,x-y}) - a|\le 6Ct$.  
It follows that the absolute value of the parenthesis in \eqref{3.33} is at most $6Ct$ whenever $x-y-z^{t,0}$ belongs to the same trapezoid as $z^{-t,x-y}$.  Also, if  $z^{-t,x-y}\notin \supp \, \phi_0$, then clearly $\frac d{dx_1} \phi_0(z^{-t,x-y})=0$, so the parenthesis is 0 whenever $x-y-z^{t,0}\notin \supp \, \phi_0$.  This means that if we let $g(x-y)$ be that parenthesis when $z^{-t,x-y}$ and $x-y-z^{t,0}$ belong to the same of the above 11 regions $R_1,\dots,R_{11}$ (the 10 trapezoids and  $\bbR^2\setminus \supp\,\phi_0$) and 0 otherwise, then $\|g\|_{L^\infty}\le 6Ct$ and $\|g\|_{L^1}\le 108Ct$ (since $z^{-t,\cdot}$ is measure-preserving) and so oddness of $g$ in $x_2$ shows
\beq \lb{3.34}
\left| \int_{\bbR^2} g(x-y)  \frac{y_1}{|y|^{2+2\alpha}} dy \right| = \left|  \int_{\bbR\times (-\infty,x_2]} g(x-y) \left(  \frac{y_1}{|y|^{2+2\alpha}} - \frac{y_1}{|y-2x_2e_2|^{2+2\alpha}} \right) dy \right| \le C''tx_2^{1-2\alpha}
\eeq
for some $C''>0$, as in the proof of \eqref{2.1a}.

Let us now assume that $z^{-t,x-y}$ and $x-y-z^{t,0}$ do not lie in the same region $R_j$, in which case the parenthesis in \eqref{3.33} is only bounded by 3.
Then \eqref{3.42} and \eqref{3.31} yield
\beq \lb{3.35}
|z^{-t,x-y} - (x-y-z^{t,0})|\le 6Ct|x-y-z^{t,0}|,
\eeq
which means that  $x-y-z^{t,0}$ must be within distance $6Ct|x-y-z^{t,0}|$ of one of the boundaries of the 11 regions (which are 23 straight segments, the rightmost two forming the segment $\{0\}\times [-3,3]$).  If $(t,x)$ is such that $|x_2|\le  2(x_1-z^{t,0}_1)$, then the points $y$ such that $x-y-z^{t,0}$ is within distance $6Ct|x-y-z^{t,0}|$ of one of the boundaries of the 11 regions form no more than $C_1\sqrt{t}$ proportion of $\partial B_r(0)$ for any $r>0$ (with some $C_1>0$).  Using this and the computation that yields \eqref{3.34} shows that the contribution of the points $y$ considered here to the integral in \eqref{3.33}  is no more than $C'''\sqrt t x_2^{1-2\alpha}$ for some $C'''>0$.

The above arguments and \eqref{3.22} show that there is $C'>0$ such that
 \beq \lb{3.36}
 \partial_{x_1} v_2(t,x)\ge x_2 \left[ \left( (2C)^{-1}-C'\sqrt t \right) x_2^{-2\alpha} - C \right]
 \eeq
 holds when $(t,x)\in[0,T]\times[0,1]^2$ and $|x_2|\in [\frac 12(x_1-z^{t,0}_1), 2(x_1-z^{t,0}_1)]$.  This bound and \eqref{3.23} will  play the same role here as \eqref{3.5} and \eqref{3.6} did before.

We now again assume that $\tht\in L^\infty(0,T;X_\beta)$ is  a solution to \eqref{1.1} with initial datum given by \eqref{3.20} and \eqref{3.21}, with some $T\in(0,\frac 1{10C}]$ and some $a_n$ as above.    That is,  $\tilde \tht(t,x):=\tht(t,\Lambda_\beta(x))$ is Lipschitz in $x$ with some constant $L$.  Let us also assume that $\tht$ is the limit of classical solutions $\tht_{n_0}$ with initial data $\tht_{0,n_0}$ obtained by truncating the sum in \eqref{3.21} at $n=n_0$ (we  call that sum $\phi_{n_0}$).  Note that $\tht_{0,n_0}\to\tht_0$ in $X_\beta$.

Fix now any $n_0$, extend $\tht_{n_0}$  oddly onto $[0,T]\times \bbR^2$, and let 
\[
u(t,x):= \int_{\bbR^2} \frac{(x-y)^\perp}{|x-y|^{2+2\alpha}}  \tht_{n_0}(t,y) dy
\]
be the velocity corresponding to $\tht_{n_0}$, and define $z^{t,x}$ as above. (Then $\tht_{n_0}$ is also a unique classical solution to \eqref{1.1} on $\bbR^2$ for at least a short time because $\tht_{0,n_0}$ is Lipschitz. But this might cease to be the case at some point, so we just assume that $\tht_{n_0}\in L^\infty([0,T];X_\beta)$ is some solution.)
Again $\tht_{n_0}(t,x)=\tht_{0,n_0}(z^{-t,x})$ shows that $u=v+w$, where $v$ is from \eqref{3.24a} and 
  \[
w(t,x)
:= \int_{\bbR^2}  \frac{(x-y)^\perp}{|x-y|^{2+2\alpha}} \, \phi_{n_0} ( \Lambda_\beta^{-1}(z^{-t,y}) )  dy.
\]
One can then show as before that if $a_n:=2^{-\gamma n}$ for a large enough $\gamma$, then $w$ will not distort the dynamic provided by $v$ much.  First, \eqref{2.15} again gives \eqref{3.7} for times $t\in[0,T_\tht]$ for some $T_\tht\in(0,T]$, but now we obtain it with interval $[\frac 89, \frac 98]$ instead of $[\frac 12, 2]$.  We then again conclude \eqref{3.11} when $\gamma$ is large enough, and similarly also
\beq \lb{3.40}
|w(t,x)-w(t,x')|\le |x-x'|
\eeq
for all $t\in [0,T_\tht]$ and $x,x'\in D_{t,n_0}:=D_t\cup (\bbR\times [-\lambda_\beta(2^{-4n_0-3}), \lambda_\beta(2^{-4n_0-3})])$ as long as
\beq \lb{3.41}
\supp \, \phi_{n_0} ( \Lambda_\beta^{-1}(z^{-t,\cdot}) ) \subseteq E_{t,n_0}:= \left\{ x\in\bbR^2 \,\bigg |\, |x_2|\in \left[\frac 12(x_1-z^{t,0}_1), 2 (x_1-z^{t,0}_1) \right] \right\}.
\eeq
Here $\gamma, T_\tht$ can be made uniform in $n_0$ because so are all the above estimates.

If now $\tau_{n_0}>0$ is the first time when $\supp \, \phi_{n_0} ( \Lambda_\beta^{-1}(z^{-t,\cdot}) )$ does not lie within $E_{t,n_0}$  
or $\supp \, \phi_{0} ( z^{-t,\cdot} )$ does not lie within $D_{t,n_0}$
($\tau_{n_0}$ must be positive because $n_0<\infty$ and \eqref{3.24} holds), then on the time interval $[0,\min\{\tau_{n_0},T_\tht\}]$ we have \eqref{3.40} when $x,x'\in D_{t,n_0}$, which means that similarly to \eqref{2.6} we can conclude
\[
\frac d{dt} \|\phi_{0} ( z^{-t,\cdot} )\|_{W^{1,\infty}} 
\le C'' \| \phi_{0} ( z^{-t,\cdot} ) \|_{W^{1,\infty}} \big( \| \phi_{0} ( z^{-t,\cdot} ) \|_{W^{1,\infty}}+1 \big).
\]
This is because $\phi_{0} (z^{-t,\cdot} )$ is transported by $u=v+w$, where $v$ is generated by $\phi_{0} ( z^{-t,\cdot} )$ and \eqref{3.40} holds (since $\phi_0$ is Lipschitz on $\bbR^2$, we do not need to use the space $X_\beta$ here).  This $C''$ is independent of $n_0$, so there is $n_0$-independent $T_\tht'\in(0,T_\tht]$ such that \eqref{3.24b} holds for $t\in[0, \min\{\tau_{n_0},T_\tht'\}]$.  But then so do \eqref{3.23a} and \eqref{3.42} because \eqref{3.11} and \eqref{3.40} hold, and \eqref{3.42} means that $\tau_{n_0}\ge T_\tht'$ because $T_\tht'\le T\le \frac 1{10C}$.

Hence the above setup holds on the time interval $[0,T_\tht']$, and in particular \eqref{3.11}, \eqref{3.24b}, and \eqref{3.36} hold on $E_{t,n_0}$ for all $t \in [0,T_\tht']$, and $\supp \, \phi_{n_0} ( \Lambda_\beta^{-1}(z^{-t,\cdot}) )\subseteq E_{t,n_0}$ for all $t \in [0,T_\tht']$.  
We now take 
\[
y:=(\lambda_\beta(2^{-4n_0}),\lambda_\beta(2^{-4n_0})) \qquad \text{and}\qquad 
y':= \left( \lambda_\beta(2^{-4n_0})+ 2n_0^{-2} {a_{n_0}}, \lambda_\beta(2^{-4n_0}-a_{n_0}) \right),
\]
(so now we have $n_0^{-2}a_{n_0}$ in place of $b_n$).  This means that 
\[
y_2-y'_2\approx \lambda_\beta'(2^{-4n_0})a_{n_0} = (1-\beta)^{\beta/(1-\beta)} 2^{-4n_0 \beta/(1-\beta)} a_{n_0} \ll n_0^{-2} {a_{n_0}},
\]
so that, for instance, \eqref{3.11} and \eqref{3.36}  yield for all large $n_0$
\[
|w_2(0,y')-w_2(0,y)| \le C' \left| 2^{-4n_0 \beta/(1-\beta)} a_{n_0} + 2^{-4n_0 /(1-\beta)} n_0^{-2} a_{n_0} + a_{n_0}^{(3-2\alpha)/2} \right| \le  C'' \left| 2^{-4n_0 \beta/(1-\beta)} a_{n_0} \right|
\]
 (provided $\gamma$ is large enough) as well as
\[
v_2(0,y')-v_2(0,y) \ge (4C)^{-1} 2^{-4n_0 (1-2\alpha)/(1-\beta)} n_0^{-2}a_{n_0} .
\]
Similarly to the case $\beta\in[0,2\alpha)$, the above properties of  $v,w$ ensure that this relationship of speeds persists for a while at points $z^{t,y'}$ and $z^{t,y}$, and specifically show that at some time 
\[
T_{n_0} = O \left( 2^{-4n_0 \beta/(1-\beta)} a_{n_0} \left( 2^{-4n_0(1-2\alpha)/(1-\beta)} n_0^{-2}{a_{n_0}} \right)^{-1} \right)
= O \big( {n_0^2} 2^{-4n_0 (\beta+2\alpha-1)/(1-\beta)}  \big) \to 0
\]
we will have 
\[
z_1^{T_{n_0},y'}-z_1^{T_{n_0},y}\in \left[ n_0^{-2} {a_{n_0}} , 3 n_0^{-2} a_{n_0} \right] \qquad\text{and}\qquad z_2^{T_{n_0},y'}=z_2^{T_{n_0},y}
\]
whenever ${n_0}$ is large enough.  But since $\tht_{0,n_0}(y)= n_0^{-1} a_{n_0}$ and $\tht_{0,n_0}(y')=0$, it follows that a Lipschitz constant for $\tilde \tht_{n_0}(T_{n_0},\cdot)$ on $\bbR\times\bbR^+$ is no less than $\frac{n_0}3$.  Since $T_{n_0}<T$ for all large $n_0$, this contradicts our hypothesis that $\tht_{n_0}\to \tht$ in $L^\infty([0,T];X_\beta)$ and the proof is finished.

 \section{Proof of Theorem \ref{T.1.1}(ii)} \lb{S4}
 
The basic setup of our finite time singularity example for each  $\alpha\in(0,\frac 14]$ and $\beta\in [2\alpha,1-2\alpha]$ will be closely related to that from \cite{KRYZ}, where the corresponding patch problem on $\bbR\times\bbR^+$ was studied.  As mentioned in the introduction, a relevant local regularity result in the latter setting was only obtained for $\alpha\in(0,\frac 1{24})$, and the finite time blow-up argument worked on an only  slightly larger interval.
We perform here a much more careful analysis in this setting, which allows us to capture the full range $\alpha\in(0,\frac 14]$, and applies to both \eqref{1.1} and the patch problem (hence it also proves the first claim in Theorem \ref{T.1.3}).  

Let us fix any $\alpha\in(0,\frac 1{4}]$ 
and $\beta\in [2\alpha,1-2\alpha]$ (the particular choice of $\beta$ will be completely irrelevant here), and let $\eps>0$ be a small number to be determined later.
Let $D^+:=\bbR^+\times\bbR^+$, and consider any (smooth if one desires) initial datum $\tht_0\in X_\beta$ that is odd in $x_1$ and satisfies 
\beq \lb{4.0}
\chi_{\Omega'} \le  \chi_{D^+} \tht_0  \le \chi_{\Omega},
\eeq
where $\Omega:=(\eps,3)\times(0,3)$ and $\Omega':=(2\eps,2)\times(0,2)$.  Assume that $T_{\tht_0}=\infty$, and extend the corresponding (unique) solution $\tht$ onto $\bbR^2$ oddly in $x_2$.  Then $\tht$ is odd in both $x_1$ and $x_2$, and transported by $u$ from \eqref{3.00} on the time interval $[0,\infty)$.  Note also that from \eqref{2.1}, \eqref{2.15}, and  $u_1(t,0,s)=0$ for all $(t,s)\in[0,\infty)\times\bbR$ (the latter due to odd symmetry in $x_1$), we have $D^+=\{z^{t,x}\,|\, x\in D^+\}$  for all $t\ge 0$ when $z^{t,x}$ solves $\frac d{dt} z^{t,x}=u(t,z^{t,x})$ and $z^{0,x}:=x$, and hence also
$0\le \sgn(x_1x_2)\, \tht(t,x)\le 1$ for all $(t,x)\in[0,\infty) \times\bbR^2$.  Similarly, for all $t\ge 0$ we have
\beq \lb{4.60}
\Omega_t=\{z^{t,x}\,|\, x\in\Omega_0\} \qquad\text{ and }\qquad D^+\setminus\Omega_t=\{z^{t,x}\,|\, x\in D^+\setminus\Omega_0\}
\eeq
when we let
 \[
 \Omega_t:=\{x\in D^+ \,|\, \tht(t,x)=1\}.
 \]

We will now show that if $\eps>0$ is small enough, then we obtain 
\[
\lim_{t\uparrow T}\| \partial_{x_1} \tht(t,\cdot) \|_{L^\infty(\bbR\times\bbR^+)}=\infty
\]
 for some $T<\infty$, yielding a contradiction with $\tht\in L^\infty_{\rm loc}([0,\infty);X_\beta)$ (which follows from $T_{\tht_0}=\infty$ and Theorem~\ref{T.1.1}(i)) and thus proving the claim.  We will do this by showing that 
$\lim_{t\uparrow T_\eps} d(0,\Omega_t)=0$ for $T_\eps:=25(3\eps)^{2\alpha}$, so that blow-up of $\partial_{x_1}\tht$ no later than by time $T_\eps$ then follows from $\tht(t,0,s)= 0$ for all $t,s>0$.  Finally, this claim will be proved by showing that for all $t\in[0,T_\eps)$ we have $\Omega_t\supseteq L_t$, where
\beq \lb{4.1}
L_t:=\{ x\in D^+ \,|\, x_1\in(X_t,1) \,\,\&\,\, x_2\in(0,x_1)\}
\eeq
is a growing trapezoid with $X_t:=\left[(3\eps)^{2\alpha}- \frac t{25} \right]^{1/2\alpha}$ (and so $X_{T_\eps}=0$).
Note that $X_0=3\eps$, so $\Omega_0 \supseteq \Omega' \supseteq L_0$, and for all $t\in[0,T_\eps]$ we have
\beq \lb{4.2}
\frac d{dt} X_t=-({50 \alpha})^{-1} {X_t^{1-2\alpha}}.
\eeq

Let now  $d(t):= d(D^+\setminus \Omega_t,L_t)$, which is continuous on $[0,T_\eps)$ by \eqref{2.0} and \eqref{4.60}.
So if $\Omega_t\supseteq L_t$ does not hold for some $t\in[0,T_\eps)$, there is a first time $t'\in[0,T_\eps)$ such that $d(t')=0$. Then $t'>0$ because $d(0)\ge\eps$, as well as $L_{t}\subseteq \Omega_{t}$ for all $t\in[0,t']$.  

Let now $\eps'>0$ be arbitrary.  Then \eqref{2.0}, \eqref{4.0}, and $\lim_{\eps\to 0} T_\eps=0$ show that for all small enough $\eps\in(0,\frac{\eps'}3]$ (their interval only depends on $\eps'$ and $\alpha$) we have $\Omega'':=(\eps',\frac 32)\times(0,\frac 32)\subseteq \Omega_t$ for all $t\in[0,T_\eps)$.  This and $d(t')=0$ imply that $\overline {D^+\setminus \Omega_{t'}} \cap \overline{L_{t'}}$ must contain a point from $I_{t'}\cup J_{t'}$, where
\[
I_t:= \{(X_{t},s)\,|\, s\in [0,X_{t}]\} \qquad\text{and}\qquad J_t:= \{(s,s)\,|\, s\in[X_t,\eps']\} 
\] 
(so $I_{t}\cup J_{t}$ is the part of $\partial L_t$ lying outside $\Omega''$, see Figure \ref{F.4.4}).  We will show that if $\eps'>0$ is small enough (depending only on $\alpha$) and $t\in[0,t']$, then $u(t,x)$ points (non-tangentially) outside of $L_t$ for all $x\in J_t$, and $u_1(t,x)\le -(45\alpha)^{-1} X_t^{1-2\alpha}$ for all $x\in I_t$.  This of course makes it impossible for $D^+\setminus \Omega_{t'}$ and $L_{t'}$ to touch at time $t'$, in view of \eqref{4.1} and \eqref{4.2}, yielding a contradiction.

\begin{figure} 
    \pgfdeclarelayer{drawing layer}
    \pgfdeclarelayer{label layer}
    \pgfsetlayers{drawing layer,label layer}
    \begin{tikzpicture}

        \begin{pgfonlayer}{drawing layer}
            \draw (-3,0) to (3,0);
            \draw (-3,0) to (-3,5.2);
            
            \draw (-2.2,0) to (-2.2,0.8);
            \draw (-2.2,0.8) to (1,4);
            \draw (1,4) to (1,0);
            
            \draw[dotted] (-1.2,0) to (-1.2,5);
            \draw[dotted] (-1.2,5) to (2.5,5);
            \draw[dotted] (2.5,5) to (2.5,0);

               \node (x) at (-0.1,1.1) {$L_{t}$};
               \node (x) at (-0.6,4.5) {$\Omega''$};
               \node (x) at (-3,-0.3) {$0$};
               \node (x) at (-1.2,-0.3) {$\eps'$};
               \node (x) at (-2.2,-0.33) {$X_t$};
               \node (x) at (1,-0.3) {$1$};
               \node (x) at (-2.45,0.4) {$I_t$};
               \node (x) at (-1.9,1.55) {$J_t$};
               
        \end{pgfonlayer}

    \end{tikzpicture}
    \caption{Region $L_t$ and segments $I_t$ and $J_t$.} \lb{F.4.4}
\end{figure}
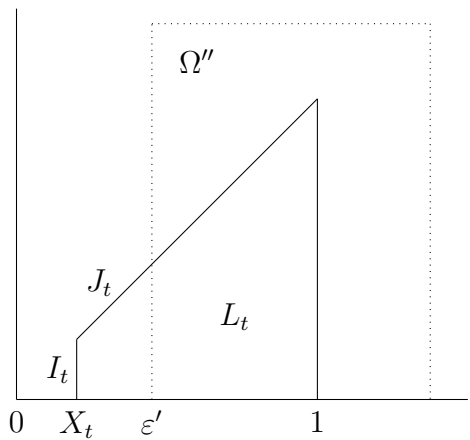

The above is the basic strategy from \cite{KRYZ}, which was there applied with $\Omega_t$ being a solution patch at time $t$ (with $\tht\equiv 1$ on $\Omega_t$ and 0 on $\bbD^+\setminus\Omega_t$, and with a second patch where $\tht\equiv -1$ lying in $\bbR^-\times\bbR^+$, positioned symmetrically to $\Omega_t$).  The crucial estimates on $u(t,\cdot)$ on the set $I_t\cup J_t$ mentioned above
were established in \cite{KRYZ} for only small enough $\alpha>0$, and here we will obtain them for the full local well-posedness range  $\alpha\in(0,\frac 14]$.

\bigskip\noindent
{\bf Estimating the velocity on $I_t$.} 
Let us start with $x=(X_t,x_2)\in I_t$ for some  $t\in[0,t']$ (the case $x\in J_t$ will be  simpler), and we will show that $L_{t}\subseteq \Omega_{t}$ guarantees
\beq\lb{4.3}
u_1(t,x)\le -(45\alpha)^{-1} x_1^{1-2\alpha}
\eeq
as long as $\eps'>0$ was chosen small enough (here of course $x_1=X_t$).  

Odd symmetry of $\tht$ across both axes shows that
\beq\lb{4.4}
u_1(t,x) = \int_{D^+} K_1(x,y) \tht(t,y) dy,
\eeq
where
\beq\lb{4.4a}
K_1(x,y):= \frac{x_2-y_2}{|x-y|^{2+2\alpha}} - \frac{x_2-y_2}{|x-\tilde y|^{2+2\alpha}}
- \frac{x_2+ y_2}{|x-\bar y|^{2+2\alpha}}  + \frac{x_2+  y_2}{|x+ y|^{2+2\alpha}} ,
\eeq
with $\bar y:=(y_1,-y_2)$ and $\tilde y:=(-y_1, y_2)$.  Let also 
\[
A^\pm_x:=\{y\in D^+\,|\, \pm K_1(x,y)>0\}.
\]
Since $0\le\tht\le 1$ on $D^+$, an upper bound on $u_1(t,x)$ can be obtained by replacing $\tht(t,\cdot)$ in \eqref{4.4} by 1 on $A^+_x\cup L'_x$ and by 0 on $A^-_x\setminus L'_x$, for any $L'_x\subseteq L_t$ (because $L_{t}\subseteq \Omega_{t}$).  One should think of $A_x^+$ being the ``bad'' set for an upper bound on $u_1(t,x)$ and $A^-_x$ the ``good'' set, and our task is complicated by the fact that a precise identification of $A^\pm_x$ is  not easy.  We will therefore use a slightly weaker bound, obtained by replacing  $\tht(t,\cdot)$ by 1 on $A^+_x\cup L'_x\cup L''_x$ and by 0 on $A^-_x\setminus (L'_x\cup L''_x)$, as well as $K_1(x,y)$ by
\[
\frac{x_2-y_2}{|x-y|^{2+2\alpha}} - \frac{x_2-y_2}{|x-\tilde y|^{2+2\alpha}} \qquad (\ge\max\{K_1(x,y),0\} \text{ when $y_2\le x_2$})
\]
for all $y\in L''_x$,  with some sets $L'_x\subseteq L_t$ and $L''_x\subseteq \bbR^+\times(0,x_2)$.  
After expanding the integral from \eqref{4.4} onto $\bbR^2$, this upper bound on $u_1(t,x)$  becomes
\beq\lb{4.4b}
\int_{S_x\cup(-S_x)\cup L''_x}  \frac{x_2-y_2}{|x-y|^{2+2\alpha}} dy - \int_{\bar S_x\cup \tilde S_x\cup \tilde L''_x}  \frac{x_2-y_2}{|x-y|^{2+2\alpha}} dy,
\eeq
where $S_x:=(A^+_x\cup L'_x)\setminus L''_x$.  We now make the specific choice
\begin{align*}
L'_x & :=\{ y\in D^+ \,|\, y_1\in(x_1,1) \,\,\&\,\, y_2\in(0,y_1-x_1+x_2)\} \subseteq L_t, \\
L''_x & := \big[ ((0,x_1]\cup[1,\infty))\times(0,x_2) \big] \setminus L'''_x,
\end{align*}
where 
\[
L'''_x:=\{ y\in D^+ \,|\, y_1\in(x_1-x_2,x_1) \,\,\&\,\, y_2\in(0,y_1-x_1+x_2)\}
\]
(recall that $x_2\in[0,x_1]$). These regions are drawn in Figure \ref{F.4.5}, with some being split into several sub-regions for later use.  So, for instance, $L'_x=\bigcup_{j=1}^4 L'_{x,j}$ and $L''_x= L''_{x,1}\cup L''_{x,2}$, where $L''_{x,2}:=[1,\infty)\times(0,x_2)$ lies far to the right and is not pictured.

\begin{figure} 
    \pgfdeclarelayer{drawing layer}
    \pgfdeclarelayer{label layer}
    \pgfsetlayers{drawing layer,label layer}
    \begin{tikzpicture}

        \begin{pgfonlayer}{drawing layer}
            \draw (-6.2,0) to (6.2,0);
            \draw (-2,-2.7) to (-2,3.7);
            \draw (-4,1.5) to (4,1.5);
            \draw (-4,-1.5) to (-4,1.5);
            \draw (0,-1.5) to (0,1.5);
            \draw (2,0) to (2,1.5);
            \draw (4,-1.5) to (4,1.5);
            \draw (-2.5,0) to (-6,3.5);
            \draw (-1.5,0) to (2,3.5);
            \draw (4,1.5) to (6,3.5);
            \draw (-2.5,0) to (-5,-2.5);
            \draw (-1.5,0) to (1,-2.5);
            \draw (2.5,0) to (5,-2.5);
            
            \draw[very thick] (-4,1.5) to (-6,3.5);
            \draw[very thick] (-4,1.5) to (-2,1.5);
            \draw[very thick] (-2,0) to (-2,1.5);
            \draw[very thick] (-2,0) to (-6.2,0);
            \draw[very thick] (-5,-2.5) to (-2.5,0);

            \draw[very thick] (4,1.5) to (6,3.5);
            \draw[very thick] (4,1.5) to (2,1.5);
            \draw[very thick] (2,0) to (2,1.5);
            \draw[very thick] (2,0) to (6.2,0);
            \draw[very thick] (5,-2.5) to (2.5,0);            

            \fill (0,1.5) circle(0.05);
               \node (x) at (-1.8,-0.25) {$0$};
               \node (x) at (-0.15,1.75) {$x$};
               \node (x) at (0.3,-0.25) {$x_1$};
               \node (x) at (2.1,-0.25) {$2x_1$};
               \node (x) at (4.4,-0.25) {$3x_1$};
               \node (x) at (-4.45,-0.25) {$-x_1$};
               \node (x) at (-2.3,1.7) {$x_2$};
               
               \node (x) at (-0.4,0.4) {$L'''_x$};
               \node (x) at (-0.4,-0.4) {$\bar L'''_x$};
               \node (x) at (-3.5,0.45) {$\tilde L'''_x$};
               \node (x) at (-3.5,-0.4) {$-L'''_x$};
               
               \node (x) at (-1.4,0.95) {$L''_{x,1}$};
               \node (x) at (-2.6,1) {$\tilde L''_{x,1}$};
               
               \node (x) at (1,0.7) {$L'_{x,1}$};
               \node (x) at (3,0.7) {$L'_{x,2}$};
               \node (x) at (5.5,1.2) {$L'_{x,3}$};
               \node (x) at (3,2.5) {$L'_{x,4}$};
               
               \node (x) at (1.8,-1.4) {$\bar L'_{x,1}$};
               \node (x) at (3.55,-0.4) {$\bar L'_{x,2}$};
               \node (x) at (5.5,-1.4) {$\bar L'_{x,3}$};
               
               \node (x) at (-5.5,1.2) {$\tilde L'_{x}$};
               \node (x) at (-5.5,-1.4) {$-L'_{x}$};
               
        \end{pgfonlayer}

    \end{tikzpicture}
    \caption{Subregions and reflections of $L'_x,L''_x,L'''_x$ for $u_1$  (not pictured are $L''_{x,2}=[1,\infty)\times(0,x_2)$, $\tilde L''_{x,2}=(-\infty,1]\times(0,x_2)$, and parts of $L'_x,\bar L'_x,\tilde L'_x,-L'_x$).} \lb{F.4.5}
\end{figure}

Our first crucial observation is that
\beq\lb{4.21}
L'''_x\subseteq A_x^+\subseteq L'_x\cup  L''_x\cup L'''_x,
\eeq
which means that $S_x=L'_x \cup L'''_x$ (note that $L'_x, L''_x, L'''_x$ are pairwise disjoint).  The second inclusion in \eqref{4.21} holds because $L'_x\cup  L''_x\cup L'''_x \supseteq \bbR^+\times(0,x_2)$ and for any $y\in \bbR^+\times[x_2,\infty)$ we have
\[
\frac{x_2-y_2}{|x-y|^{2+2\alpha}} \le \frac{x_2-y_2}{|x-\tilde y|^{2+2\alpha}} \qquad\text{and}\qquad
 \frac{x_2+ y_2}{|x-\bar y|^{2+2\alpha}}  \ge \frac{x_2+  y_2}{|x+ y|^{2+2\alpha}}.
\] 
The first inclusion in \eqref{4.21} is due to the following lemma, with $(x,b_1,b_2)$ in the lemma being $(x-y,x_1,x_2)$ here (note that for any $y\in L'''_x$ we have $0<x_1-y_1\le x_2-y_2\le x_2\le x_1$).  We also note that if $x_2=0$, then \eqref{4.21} holds trivially because all the sets in it are empty.

\begin{lemma} \lb{L.4.1}
Whenever $\alpha\ge 0$, $b_1,b_2>0$, and $0< x_1\le x_2< \min\{b_1,b_2\}$, we have
\[
\frac {x_2}{|(x_1,x_2)|^{2+2\alpha}} - \frac {x_2}{|(2b_1-x_1,x_2)|^{2+2\alpha}} 
- \frac {2b_2-x_2}{|(x_1,2b_2-x_2)|^{2+2\alpha}} + \frac {2b_2-x_2}{|(2b_1-x_1,2b_2-x_2)|^{2+2\alpha}} > 0.
\]
\end{lemma}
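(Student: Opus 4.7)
My plan is to recast the claim as a monotonicity statement about a one-variable function, then exploit the shape of that function. I would first set $F(a,c) := c/(a^2+c^2)^{1+\alpha}$ and $(a_1,a_2,c_1,c_2) := (x_1,\, 2b_1-x_1,\, x_2,\, 2b_2-x_2)$, so that the hypotheses give $0 < a_1 \le c_1 < c_2$ and $a_1 < a_2$, and the inequality reads
\[
T := F(a_1,c_1) - F(a_2,c_1) - F(a_1,c_2) + F(a_2,c_2) > 0.
\]
Defining $G(a) := F(a,c_1) - F(a,c_2)$, one has $T = G(a_1) - G(a_2)$, so it suffices to prove $G(a_1) > G(a_2)$.

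With $\rho := c_2/c_1 > 1$, a direct differentiation will show that $G'(a) < 0$ precisely when $(a^2+c_2^2)/(a^2+c_1^2) > \rho^{1/(2+\alpha)}$. Since the left-hand side is strictly decreasing in $a$ from $\rho^2$ at $a = 0$ down to $1$ as $a \to \infty$, there is a unique $a^\# > 0$ at which $G$ transitions from strictly decreasing to strictly increasing. Two AM--GM bounds are key, and I would establish them as follows: the strict chain $(1+\rho^2)/2 > \rho \ge \rho^{1/(2+\alpha)}$, evaluated at $a = c_1$, forces $a^\# > c_1 \ge a_1$; and the strictly decreasing function $(a_1^2+c_2^2)/(a_1^2+c_1^2)$ on $[0, c_1]$ attains its minimum $(1+\rho^2)/2$ at $a_1 = c_1$, which exceeds $\rho^{1/(1+\alpha)}$, so raising to the $(1+\alpha)$-power yields $G(a_1) > 0$.

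Combined with $G(a) \to 0^-$ as $a \to \infty$, the shape of $G$ will imply that $G$ has a unique zero $a^* \in (0, a^\#)$ and $G(a) < 0$ for $a \in (a^*, \infty)$; in particular the bound $G(a_1) > 0$ forces $a_1 < a^*$. To conclude, I would split on $a_2$: if $a_2 \le a^\#$ then $G$ is strictly decreasing on $[a_1, a_2]$, so $G(a_1) > G(a_2)$; otherwise $a_2 > a^\# > a^*$, so $G(a_2) < 0 < G(a_1)$. Either way, $T > 0$. The main obstacle will be verifying the two AM--GM bounds uniformly in $\alpha \ge 0$, but both reduce to the trivial inequality $\rho^{1/(k+\alpha)} \le \rho$ for $k \in \{1,2\}$, valid since $\rho > 1$.
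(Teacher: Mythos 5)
Your proof is correct and takes essentially the same route as the paper: both reduce the claim to the unimodal shape (strictly decreasing, then strictly increasing with limit $0$) of the same one-variable function --- your $G(a)=F(a,c_1)-F(a,c_2)$ is the paper's $f$ up to the rescaling $y=x/b_2$ --- established by the same derivative-sign computation, combined with positivity of that function at a point to the left of its minimizer. The only notable difference is how that positivity is checked: you obtain $G(a_1)>0$ and $a^{\#}>c_1$ from the AM--GM chain $(1+\rho^2)/2>\rho\ge\rho^{1/(k+\alpha)}$, while the paper proves the equivalent fact $f(y_2)\ge 0$ by a short monotonicity-in-$y_2$ computation and then concludes without your case split by noting $f(y_1)>f(s)$ for all $s>y_1$.
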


\begin{proof}
Letting $c:=\frac{b_1}{b_2}$ and $y:=\frac x{b_2}$, this claim becomes equivalent to
\beq \lb{4.6}
\frac {y_2}{|(y_1,y_2)|^{2+2\alpha}} - \frac {2-y_2}{|(y_1,2-y_2)|^{2+2\alpha}} > \frac {y_2}{|(2c-y_1,y_2)|^{2+2\alpha}}  -  \frac {2-y_2}{|(2c-y_1,2-y_2)|^{2+2\alpha}}
\eeq
for $0< y_1\le y_2 < \min\{c,1\}$.  If we let $f(y_1)$ be the left-hand side of \eqref{4.6}, then we need to show $f(y_1)> f(2c-y_1)$.  Direct differentiation of $f$ shows that 
\[
\sgn f'(s)=\sgn \left( (2-y_2)(s^2+y_2^2)^{2+\alpha} - y_2(s^2+(2-y_2)^2)^{2+\alpha} \right),
\]
from which it easily follows that $f'$ vanishes at a single $s'>0$, as well as that $f$ is decreasing on $[0,s']$ and increasing on $[s',\infty)$ (both strictly).
Since $\lim_{s\to\infty} f(s)=0$ and $2c-y_1> y_1$, it suffices to show that $f(y_2)\ge 0$ because then $y_1\le y_2$ implies $f(y_1)> f(s)$ for all ${s> y_1}$.  That is, it suffices to show that
\[
y_2^{-1-2\alpha} - (2-y_2)(y_2^2-2y_2+2)^{-1-\alpha} \ge 0
\]
for all $y_2\in(0,1)$.  Equality holds when $y_2=1$, and a direct computation shows that the derivative of the left-hand side is no more than 
\[
-(1+2\alpha)y_2^{-2-2\alpha} + (y_2^2-2y_2+2)^{-1-\alpha} < 0
\]
for all $y_2\in (0,1)$ (note that $-2y_2+2< 0$).  Hence the claim follows.
\end{proof}

Since $S_x=L'_x \cup L'''_x$, the bound \eqref{4.4b} becomes
\beq \lb{4.4c}
\int_{L'_x\cup(-L'_x)\cup L'''_x\cup(-L'''_x)\cup L''_x}  \frac{x_2-y_2}{|x-y|^{2+2\alpha}} dy - \int_{\bar L'_x\cup \tilde L'_x\cup \bar L'''_x\cup \tilde L'''_x\cup \tilde L''_x}  \frac{x_2-y_2}{|x-y|^{2+2\alpha}} dy.
\eeq
Because the first integrand at $(y_1,y_2)$ equals the second integrand at $(2x_1-y_1,y_2)$, and vice versa, the integrals over the regions contained in $\bbR^-\times\bbR$  cancel with those over the images of these regions under the mapping $y\mapsto (2x_1-y_1,y_2)$ (in the notation from Figure \ref{F.4.5}, region  $\tilde L'_x\cup \tilde L'''_x \cup \tilde L''_{x,1} \cup \tilde L''_{x,2}$ cancels with $L'_{x,3}\cup L'_{x,2} \cup  L''_{x,2}$ and $(-L'_x)\cup (-L'''_x)$ cancels with $\bar L'_{x,3} \cup \bar L'_{x,2}$; these regions are marked by bold lines).  Each of these images is a region contained in the integration domain of the integral not containing the original region (whence the cancelation),  except for the subregions $[(-1,-1+2 x_1)\times(x_2,1)]\cap \tilde L'_x$ and $[(-1,-1+2 x_1)\times(-1,0)]\cap (- L'_x)$, whose images lie in $(1,\infty)\times(\bbR\setminus[0,x_2])$ and so the integrals over them do not get canceled.  However, since $0<x_2\le x_1\le3\eps\ll 1$ and both these regions have areas $\le 2x_1$, the contribution of the integrals over them to \eqref{4.4c} will be bounded above by $16x_1$ and hence negligible in the proof of \eqref{4.3} because $\alpha>0$.  After this cancellation we are left with regions
\begin{align*}
B &:=(0,2x_1)\times(0,x_2), \\
G &:= \{ y\in \bbR^2 \,|\, y_2 > x_2 \,\,\&\,\, y_1\in(y_2-x_2+x_1,y_2-x_2+3x_1) \,\,\&\,\, y_1<1\}, \\
G^- &:= \{ y\in \bbR^2 \,|\, y_2 <0 \,\,\&\,\, y_1\in(-y_2-x_2+x_1,-y_2-x_2+3x_1) \,\,\&\,\, y_1<1\}
\end{align*}
(so $B=L''_{x,1}\cup L'''_{x}\cup L'_{x,1}$,  $G=L'_{x,4}$, and $G^-=\bar L'_{x,1}\cup \bar L'''_{x}$ in Figure \ref{F.4.5}), and this all yields
\[
u_1(t,x)\le \int_{B\cup G}  \frac{x_2-y_2}{|x-y|^{2+2\alpha}} dy - \int_{G^-}  \frac{x_2-y_2}{|x-y|^{2+2\alpha}} dy + 16x_1.
\]

Here the contribution of the ``bad'' region $B$ is positive, while that of the ``good'' regions $G,G^-$ is negative.  To simplify the following computations, we can replace $G,G^-$ by regions $G_*,G^-_*$ that are defined identically but without the constraint $y_1<1$.  This again changes the estimate by less than $C_\alpha x_1$ for some $C_\alpha$ depending only on $\alpha$ (and, in particular, not on $x$), so we obtain the bound
\[
u_1(t,x)\le \int_{B\cup G_*}  \frac{x_2-y_2}{|x-y|^{2+2\alpha}} dy - \int_{G^-_*}  \frac{x_2-y_2}{|x-y|^{2+2\alpha}} dy + (C_\alpha+16)x_1.
\]
The change of variables $z:=\frac{y-x}{x_1}$ turns this bound into
\[
- u_1(t,x) x_1^{2\alpha-1} \ge \int_{G_{b} \cup G^-_{b}}  \frac{|z_2|}{|z|^{2+2\alpha}} dz - \int_{B^-_{b}}  \frac{|z_2|}{|z|^{2+2\alpha}} dz - (C_\alpha+16)x_1^{2\alpha},
\]
where $b:=\frac{x_2}{x_1}$ and the sets $B^-_{b}, G_{b}, G^-_{b}$ (from the following lemma, also drawn in Figure \ref{F.4.7}) are the images of $B,G_*,G^-_*$ under this change of variables.  Estimate \eqref{4.3} now follows for all small enough $\eps'>0$ (recall that $x_1\le3\eps\le\eps'$) by the key Lemma \ref{L.4.2}, which we prove after finishing the proof of Theorem \ref{T.1.1}(ii).

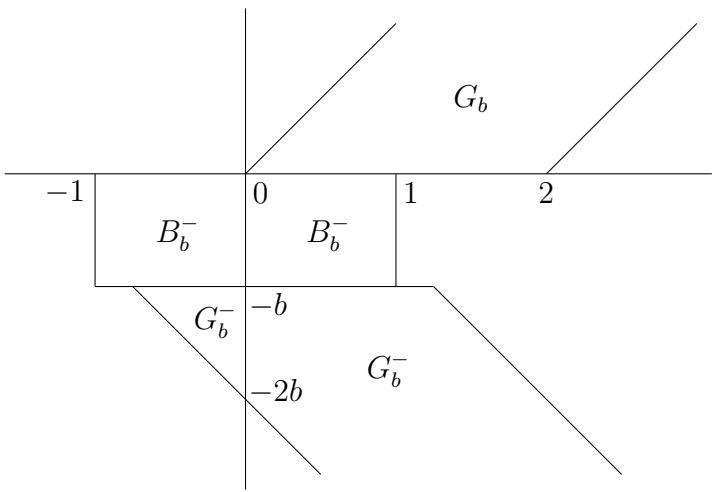
\begin{figure} 
    \pgfdeclarelayer{drawing layer}
    \pgfdeclarelayer{label layer}
    \pgfsetlayers{drawing layer,label layer}
    \begin{tikzpicture}

        \begin{pgfonlayer}{drawing layer}
            \draw (-3.2,1.5) to (6.2,1.5);
            \draw (0,-2.7) to (0,3.7);
            \draw (-2,0) to (2.5,0);
            \draw (-2,0) to (-2,1.5);
            \draw (2,0) to (2,1.5);

            \draw (0,1.5) to (2,3.5);
            \draw (4,1.5) to (6,3.5);
            \draw (-1.5,0) to (1,-2.5);
            \draw (2.5,0) to (5,-2.5);

               \node (x) at (0.2,1.25) {$0$};
               \node (x) at (-2.4,1.25) {$-1$};
               \node (x) at (2.2,1.25) {$1$};
               \node (x) at (4,1.25) {$2$};
               \node (x) at (0.3,-0.25) {$-b$};
               \node (x) at (0.4,-1.4) {$-2b$};
               
               \node (x) at (-0.4,-0.45) {$G^-_b$};
               \node (x) at (1.1,0.7) {$B^-_b$};
               \node (x) at (-0.9,0.7) {$B^-_b$};
               \node (x) at (3,2.5) {$G_b$};
               \node (x) at (1.9,-1.1) {$G^-_b$};

        \end{pgfonlayer}

    \end{tikzpicture}
    \caption{The regions $B^-_{b}, G_{b}, G^-_{b}$ for $u_1$.} \lb{F.4.7}
\end{figure}

\begin{lemma} \lb{L.4.2}
If $\alpha\in(0,\frac 14]$ and for any $b\in[0,1]$ we let  
\begin{align*}
B_b^-& :=(-1,1)\times(-b,0), \\
G_b & :=\{ x\in \bbR^2 \,|\,  x_2>0 \,\,\&\,\,  x_1\in(x_2,x_2+2)\}, \\
G_b^- & :=\{ x\in \bbR^2 \,|\, x_2<-b \,\,\&\,\, x_1\in(-x_2-2b,-x_2-2b+2)\},
\end{align*}
then
\beq \lb{4.7}
\inf_{b\in[0,1]} \left\{ \int_{G_b\cup G_b^-} \frac{|x_2|}{|x|^{2+2\alpha}} dx -   \int_{B_b^-} \frac{|x_2|}{|x|^{2+2\alpha}} dx \right\} \ge \frac 1{40 \alpha}.
\eeq
\end{lemma}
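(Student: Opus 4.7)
The plan is to (a) compute the bad integral $\int_{B_b^-}$ exactly, (b) reduce both pieces of the good integral to integrals over the fixed region $G_0$ via an explicit change of variables and a translation symmetry, and then (c) identify the critical value of $b \in [0,1]$ and verify the bound there.

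For (a), integrating $\int_{-b}^0 (-x_2)(x_1^2+x_2^2)^{-1-\alpha}\,dx_2$ via the antiderivative $\tfrac{1}{2\alpha}(x_1^2+x_2^2)^{-\alpha}$ and then in $x_1$ yields
\[
\int_{B_b^-}\frac{|x_2|}{|x|^{2+2\alpha}}\,dx = \frac{1}{\alpha(1-2\alpha)} - \frac{1}{2\alpha}\int_{-1}^1 (x_1^2+b^2)^{-\alpha}\,dx_1,
\]
which vanishes at $b=0$ and is monotone increasing in $b$. For (b), first observe that $G_b = G_0$ does not depend on $b$: parametrizing by $x_1 = x_2+s$ with $s\in(0,2)$ and completing the square $x_1^2+x_2^2 = 2(x_2+s/2)^2 + s^2/2$, a routine computation gives the closed form
\[
\int_{G_0}\frac{x_2}{|x|^{2+2\alpha}}\,dx = \frac{2^{-1-2\alpha}}{\alpha(1-2\alpha)} - \frac{2^{-\alpha}}{1-2\alpha}\int_1^\infty (v^2+1)^{-1-\alpha}\,dv.
\]
For $G_b^-$, the key observation is that $G_b^- = G_0^- - (b,b)$; combining this translation with the reflection $y_2 \mapsto -y_2$ (which sends $G_0^-$ to $G_0$) produces the convenient reformulation
\[
\int_{G_b^-}\frac{|x_2|}{|x|^{2+2\alpha}}\,dx = \int_{G_0}\frac{y_2+b}{((y_1-b)^2+(y_2+b)^2)^{1+\alpha}}\,dy,
\]
so that both pieces of the good integral now live on the same fixed domain $G_0$ and are directly comparable.

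With these reductions, let $F(b)$ denote the expression whose infimum is to be bounded. The bad integral is monotone in $b$ and the $G_0$ part of the good integral is constant, but the $G_b^-$ contribution is non-monotone --- the strip $G_b^-$ translates along the diagonal $x_1=x_2$, and its closest distance to the origin is minimized at some intermediate $b^*\in(0,1)$. The plan for (c) is to use a sub-interval analysis (say $b\in[0,\tfrac12]$ and $b\in[\tfrac12,1]$) to locate the minimum of $F$ and thereby reduce matters to checking one or two specific values of $b$. At $b=0$ the bad term vanishes and $F(0) = 2\int_{G_0}$ comfortably exceeds $\tfrac{1}{40\alpha}$ for all $\alpha\in(0,\tfrac14]$. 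At $b=1$, substituting $s = 2-\sigma$ in the $s$-parametrization of $G_1^-$ and completing the square produces a term equal to exactly $\tfrac{1}{4\alpha}\int_{-1}^1 (x_1^2+1)^{-\alpha}\,dx_1$, which cancels precisely one half of the negative piece of $\int_{B_1^-}$ --- this is the ``evaluating the relevant integrals exactly in the most critical case'' step referred to in the paper's introduction, and the cancellation produces a tractable expression whose remaining pieces admit sharp bounds.

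The main obstacle is the final quantitative verification at the critical $b$. As the author notes, the estimate is essentially tight (the blow-up argument breaks down at $\alpha\approx 0.257$, barely past the required $\tfrac14$), so the target $\tfrac{1}{40\alpha}$ allows very little slack. Consequently one cannot rely on coarse pointwise estimates: the tail integrals $\int_1^\infty(v^2+1)^{-1-\alpha}\,dv$ and the analogous $\sigma$-integrals arising at $b=1$ must be bounded essentially sharply and uniformly over $\alpha\in(0,\tfrac14]$, and any nonnegative terms dropped when bounding the $G_b^-$ contribution must be genuinely small relative to the remaining margin. This delicate numerical accounting --- rather than the structural reductions in (a)--(b) --- is where the bulk of the real work lies.
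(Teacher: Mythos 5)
Your structural reductions in parts (a) and (b) are correct as far as they go: the exact evaluation of $\int_{B_b^-}$, the observation that $G_b = G_0$ is $b$-independent, and the identity $G_b^- = G_0^- - (b,b)$ combined with the reflection $x_2\mapsto -x_2$ are all valid, and they do move both ``good'' integrals onto the fixed domain $G_0$. However, this is where the proposal stops being a proof. Part (c) is not carried out --- it is a plan for a computation you have not done --- and you yourself acknowledge that ``the delicate numerical accounting \ldots is where the bulk of the real work lies.'' Since the lemma is essentially a sharp numerical inequality (the whole argument fails for $\alpha$ just above $0.257$), leaving the quantitative step as a sketch means the lemma is not actually established. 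In particular, the claim that the infimum occurs at some interior $b^*\in(0,1)$ is offered as intuition but not proved, and the precise cancellation you assert at $b=1$ (the ``one half of the negative piece'' claim) is stated without calculation.

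The paper's proof handles the $b$-dependence quite differently and more decisively. Rather than tracking the full region $G_b^-$, it first \emph{shrinks} $G_b^-$ to $G_b^*:=G_b^-\cap\{x_1<-x_2\}$, which leaves the $b=1$ value unchanged but only lowers the quantity for $b<1$; this one-sided replacement eliminates the awkward tail of $G_b^-$ and makes the resulting expression $I(b)$ admit a clean closed form with a single $b^{1-2\alpha}(\cdots)$ term. It then substitutes $J(c):=\tfrac12 I(1/c)$, computes $J'(c)=(1-2\alpha)c^{2\alpha-2}g(c)$ with $g$ explicitly monotone increasing, and splits into the two cases $g(1)\ge 0$ (minimum at $b=1$) and $g(1)<0$ (infimum bounded by $I(1)+2g(1)$). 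Both of the resulting quantities $I(1)$ and $I(1)+2g(1)$ are then bounded below by elementary estimates on $f(1)=\int_0^1(q^2+1)^{-\alpha}dq$ and $\mu_\alpha$, giving $\inf I(b)\ge \tfrac1{20}$ and hence the desired $\tfrac1{40\alpha}$. This monotonicity-driven reduction to the single value $b=1$ (or a simple correction to it) is the missing idea in your approach; without something like it, locating and certifying the minimizer over all $b\in[0,1]$ from the representation you derived is considerably harder, and your subinterval plan would still require you to control the $b$-derivative of $\int_{G_b^-}$, which you have not attempted. You would need to either import the shrinking trick and the $J(c)$ substitution, or supply a rigorous replacement for them, before the proposal can be considered a proof.
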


This concludes the case $x\in I_t$. We note that the only place where the above analysis is not optimal is the replacement of $K_1$ by the difference of its first two terms on $L''_{x,1}$ instead of identification of the set $A^+_x\cap L''_{x,1}$ (the set $L''_{x,2}\subseteq[1,\infty)\times\bbR^+$ is irrelevant because its contribution is of lower order and hence negligible here).  
While picking $L'_x=L_t$ for all $x$ may seem to yield further improvement, we in fact do have $L'_x=L_t$ when $x=(X_t,X_t)$ and so $b=1$ above, which will be the most crucial value in the proof of Lemma \ref{L.4.2}.
 
 As for  Lemma~\ref{L.4.2}, if we let $V_{\alpha,b}$ be the value inside the infimum in \eqref{4.7}, then we need to obtain $\inf_{b\in[0,1]} V_{\alpha,b} >0$ for our blow-up analysis to work.  The proof of  Lemma~\ref{L.4.2}  evaluates $V_{\alpha,1}$ {\it exactly} (we could also do it for $V_{\alpha,b}$ with $b\in[0,1)$ but the resulting formulas are quite daunting to deal with, so we instead obtain a much more convenient lower bound) and yields
\[
\frac{2\alpha}{4-2^{-\alpha}} V_{\alpha,1} = \int_0^1 \frac {dq}{(q^2+1)^{\alpha}}  - \frac{2+2^{-\alpha}-2^{1-2\alpha} }{(1-2\alpha) (4-2^{-\alpha})}.
\]
This is decreasing in $\alpha$,  so to cover the full region $\alpha \in(0,\frac 14]$, we certainly need this number to be positive for $\alpha=\frac 14$.  It indeed is, but barely so: it is approximately $ 0.937 - 0.903=0.034$.  
It in fact becomes negative around $\alpha=0.257$,
which illustrates why showing $L'''_x\subseteq A^+_x$ in Lemma~\ref{L.4.1} is also crucial for our analysis to apply to all $\alpha\in(0,\frac 14]$.  If we instead included $L'''_x$ in $L''_x$, this would replace $\bar L'''_{x}$ by $\bar L'_{x,2}$ in the region $G^-$
and ultimately lower the upper end of the interval of $\alpha$ for which the resulting $V_{\alpha,1}$ is positive below $\frac 14$.

\bigskip\noindent
{\bf Estimating the velocity on $J_t$.} 
Now consider $x=(x_*,x_*)\in J_t$ for some  $t\in[0,t']$, so $x_*\in[X_t,\eps']$.  Since 
\[
\Omega_t\supseteq L_t \supseteq L_x:=\{ y\in L_t\,|\, y_1> x_1 \},
\]
the proof from the case $x\in I_t$, with $L_t$ replaced by $L_x$, again yields \eqref{4.3}.
Since we need to  show that $u(t,x)$ points (non-tangentially) outside $L_t$ at $x$, it now suffices to show that 
 \beq\lb{4.31}
u_2(t,x)\ge 0
\eeq
when $\eps'>0$ is small enough.  

We proceed similarly as for $x\in I_t$, but the argument will this time be a little simpler.
Odd symmetry of $\tht$ in both $x_1$ and $x_2$  shows that
\beq\lb{4.34}
u_2(t,x) = \int_{D^+} K_2(x,y) \tht(t,y) dy,
\eeq
where
\beq\lb{4.34a}
K_2(x,y):= \frac{y_1-x_1}{|x-y|^{2+2\alpha}} - \frac{y_1-x_1}{|x-\bar y|^{2+2\alpha}} 
+ \frac{y_1+x_1}{|x-\tilde y|^{2+2\alpha}} - \frac{y_1+x_1}{|x+ y|^{2+2\alpha}},
\eeq
and now we let 
\[
A^\pm_x:=\{y\in D^+\,|\, \pm K_2(x,y)>0\}.
\]
Since $0\le\tht\le 1$ on $D^+$, a lower bound on $u_2(t,x)$ can be obtained by replacing $\tht(t,\cdot)$ in \eqref{4.34} by 1 on $A^-_x\cup L_x$ and by 0 on $A^+_x\setminus L_x$ (so now we just pick $L'_x= L_x$) because $A^-_x$ is now the ``bad'' set for a lower bound.  We will again use a slightly weaker bound,  replacing  $\tht(t,\cdot)$ by 1 on $A^-_x\cup L_x\cup L''_x$ and by 0 on $A^+_x\setminus (L_x\cup L''_x)$, as well as $K_2(x,y)$ by
\[
\frac{y_1-x_1}{|x-y|^{2+2\alpha}} - \frac{y_1-x_1}{|x-\bar y|^{2+2\alpha}}  \qquad(\le\min\{K_2(x,y),0\} \text{ when $y_1\le x_1$})
\]
for all $y\in L''_x:= (0,x_1)\times\bbR^+$ (so this time there will be no $L'''_x$ and no need for Lemma \ref{L.4.1}).  
After expanding the integral from \eqref{4.34} onto $\bbR^2$, this lower bound on $u_2(t,x)$ becomes
\beq\lb{4.34b}
\int_{S_x\cup(-S_x)\cup L''_x}  \frac{y_1-x_1}{|x-y|^{2+2\alpha}} dy - 
\int_{\bar S_x\cup \tilde S_x\cup \bar L''_x}  \frac{y_1-x_1}{|x-y|^{2+2\alpha}} dy,
\eeq
where $S_x:=(A^-_x\cup L_x)\setminus L''_x$.  But since now $A^-_x\subseteq L''_x$ due to
\[
\frac{y_1-x_1}{|x-y|^{2+2\alpha}} \ge \frac{y_1-x_1}{|x-\bar y|^{2+2\alpha}} \qquad\text{and}\qquad 
 \frac{y_1+x_1}{|x-\tilde y|^{2+2\alpha}} \ge \frac{y_1+x_1}{|x+ y|^{2+2\alpha}}
\] 
for all $y\in [x_1,\infty) \times \bbR^+$, it follows from $L_x\cap L''_x=\emptyset$ that $S_x=L_x$.
The regions $L_x,L''_x$, and their reflections are drawn in Figure \ref{F.4.6} (recall that $x_1=x_2=x_*$).

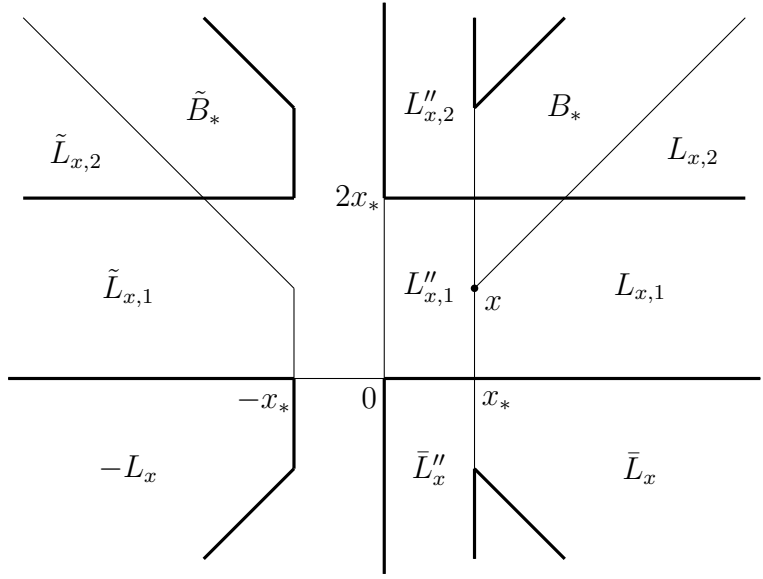
\begin{figure} 
    \pgfdeclarelayer{drawing layer}
    \pgfdeclarelayer{label layer}
    \pgfsetlayers{drawing layer,label layer}
    \begin{tikzpicture}

        \begin{pgfonlayer}{drawing layer}
            \draw (-5,0) to (5,0);
            \draw[very thick] (-5,0) to (-1.2,0);
            \draw[very thick] (0,0) to (5,0);
            \draw (0,-2.6) to (0,5);
            \draw[very thick] (0,-2.6) to (0,0);
            \draw[very thick] (0,2.4) to (0,5);
            
            \draw (1.2,-2.4) to (1.2,4.8);
            \draw[very thick] (1.2,-2.4) to (1.2,-1.2);
            \draw[very thick] (1.2,3.6) to (1.2,4.8);
            \draw[very thick] (1.2,-1.2) to (2.4,-2.4);
            \draw[very thick] (1.2,3.6) to (2.4,4.8);
            \draw (1.2,1.2) to (4.8,4.8);
            \draw[very thick] (0,2.4) to (4.8,2.4);

            \draw[very thick] (-1.2,0) to (-1.2,-1.2);
            \draw[very thick] (-1.2,3.6) to (-1.2,2.4);
            \draw[very thick] (-1.2,-1.2) to (-2.4,-2.4);
            \draw[very thick] (-1.2,3.6) to (-2.4,4.8);            
            \draw (-1.2,1.2) to (-4.8,4.8);
            \draw[very thick] (-1.2,2.4) to (-4.8,2.4);
            \draw (-1.2,0) to (-1.2,1.2);

            \fill (1.2,1.2) circle(0.05);
               \node (x) at (-0.2,-0.25) {$0$};
               \node (x) at (1.45,1) {$x$};
               \node (x) at (1.5,-0.3) {$x_*$};
               \node (x) at (-1.6,-0.3) {$-x_*$};
               \node (x) at (-0.35,2.4) {$2x_*$};
               
               \node (x) at (0.6,3.6) {$L''_{x,2}$};
               \node (x) at (0.6,1.2) {$L''_{x,1}$};
               \node (x) at (2.4,3.6) {$B_*$};
               \node (x) at (3.4,1.2) {$L_{x,1}$};
               \node (x) at (4.1,3) {$L_{x,2}$};
               \node (x) at (0.6,-1.2) {$\bar L''_{x}$};
               \node (x) at (3.4,-1.2) {$\bar L_{x}$};
               
               \node (x) at (-2.4,3.6) {$\tilde B_*$};
               \node (x) at (-3.4,1.2) {$\tilde L_{x,1}$};
               \node (x) at (-4.1,3) {$\tilde L_{x,2}$};
               \node (x) at (-3.4,-1.2) {$- L_{x}$};

        \end{pgfonlayer}

    \end{tikzpicture}
    \caption{Subregions and reflections of $L_x,L''_x$ for $u_2$.} \lb{F.4.6}
\end{figure}

So \eqref{4.34b} becomes
\beq \lb{4.34c}
\int_{L_x\cup(-L_x)\cup L''_x}  \frac{y_1-x_1}{|x-y|^{2+2\alpha}} dy - 
\int_{\bar L_x\cup \tilde L_x\cup \bar L''_x}  \frac{y_1-x_1}{|x-y|^{2+2\alpha}} dy,
\eeq
Because the first integrand at $(y_1,y_2)$ equals the second integrand at $(y_1,2x_2-y_2)$, and vice versa, the integrals over the regions contained in $\bbR\times\bbR^-$  cancel with those over the images of these regions under the mapping $y\mapsto (y_1,2x_2-y_2)$ (in the notation from Figure \ref{F.4.6}, region  $\bar L''_x \cup \bar L_{x}$ cancels with $L''_{x,2}\cup L_{x,2}$ and $-L'_x$ cancels with $\tilde L_{x,1} \cup \tilde L_{x,2}$).  However, this cancellation requires us to add the regions
\[
B_* :=  \{ y\in \bbR^2 \,|\, y_1>x^* \,\,\&\,\, y_2\in(\min\{y_1,2x_*\} ,y_1+2x_*) \}
\]
and $\tilde B_*$ to $L_x$ and $\tilde L_x$, respectively (in fact, then $B_*\cap([1,\infty)\times\bbR)$ and $\tilde B_*\cap((-\infty,-1]\times\bbR)$ will not get cancelled, but their contribution is again only $O(x_*)$ and hence negligible) So we have to subtract their contribution, meaning that \eqref{4.31} will follow if we can show
\beq \lb{4.34d}
\int_{G_*\cup B \cup \tilde B_*}  \frac{y_1-x_1}{|x-y|^{2+2\alpha}} dy - 
\int_{\tilde G_*\cup B_*}  \frac{y_1-x_1}{|x-y|^{2+2\alpha}} dy \ge C x_*
\eeq
for all $x_*\in(0,\eps']$, for any constant $C<\infty$  and small enough ($C$-dependent) $\eps'>0$, where 
\[
G_* := \{ y\in \bbR^2 \,|\, y_1>x^* \,\,\&\,\, y_2\in(0,\min\{y_1,2x_*\}) \}
\]
and  $B:=(0,x_*)\times(0,2x_*)$ (note that the difference between $G_* \cup \tilde G_*$ and $L_{x,1} \cup \tilde L_{x,1}$ from Figure \ref{F.4.6} is a subset of $[\bbR\setminus(-1,1)]\times (0,2x_*)]$ and hence only contributes $O(x_*)$ to \eqref{4.34d}).

We again apply to \eqref{4.34d} the change of variables $z:=\frac{y-x}{x_*}$, but first we note that the mapping $y\mapsto (2x_1-y_1,y_2)$ changes the sign of the integrand, so the integral over $B$ cancels with that over $(x_*,2x_*)\times (0,2x_*)$, which means that we can remove domains $B$ and $G_*\cap[(x_*,2x_*)\times(0,2x_*)]$ from the first integral if we also add the triangle $[(x_*,2x_*)\times (0,2x_*)]\setminus G_*$ to the second.  This and the change of variables now show that \eqref{4.34d} is equivalent to
\[
 \int_{G_0 \cup G^-_0}  \frac{|z_1|}{|z|^{2+2\alpha}} dz - \int_{B_0 \cup B^-_0}  \frac{|z_1|}{|z|^{2+2\alpha}} dz 
 \ge  C x_*^{2\alpha},
\]
where the sets $G_0, G^-_0, B_0, B^-_0$ are from the following lemma (also drawn in Figure \ref{F.4.8}).  The last estimate now follows for all small enough $\eps'>0$ and all $x_*\in(0,\eps]$ by our last key Lemma~\ref{L.4.3}, which we prove after the proof of Lemma \ref{L.4.2}.

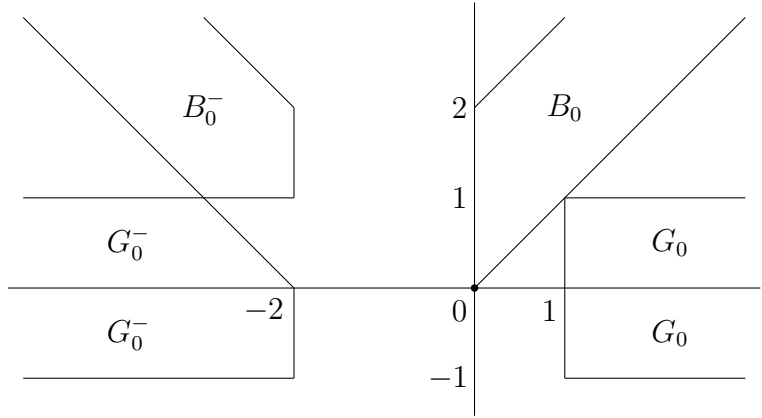
\begin{figure} 
    \pgfdeclarelayer{drawing layer}
    \pgfdeclarelayer{label layer}
    \pgfsetlayers{drawing layer,label layer}
    \begin{tikzpicture}

        \begin{pgfonlayer}{drawing layer}
            \draw (-5,1.2) to (5,1.2);
            \draw (1.2,-0.5) to (1.2,5);
            \draw(1.2,3.6) to (2.4,4.8);
            \draw (1.2,1.2) to (4.8,4.8);
            \draw (2.4,2.4) to (4.8,2.4);

            \draw (-1.2,3.6) to (-1.2,2.4);
            \draw (-1.2,3.6) to (-2.4,4.8);            
            \draw (-1.2,1.2) to (-4.8,4.8);
            \draw (-1.2,2.4) to (-4.8,2.4);
            \draw (-1.2,0) to (-1.2,1.2);
            \draw (-4.8,0) to (-1.2,0);
            \draw (4.8,0) to (2.4,0);
            \draw (2.4,2.4) to (2.4,0);

            \fill (1.2,1.2) circle(0.05);
               \node (x) at (1,0.9) {$0$};
               \node (x) at (-1.6,0.9) {$-2$};
               \node (x) at (2.2,0.9) {$1$};
               \node (x) at (1,3.6) {$2$};
               \node (x) at (1,2.4) {$1$};               
               \node (x) at (0.85,0) {$-1$}; 
               
               \node (x) at (2.4,3.6) {$B_0$};
               \node (x) at (-2.4,3.6) {$B^-_0$};
               \node (x) at (-3.4,1.8) {$G^-_0$};
               \node (x) at (-3.4,0.6) {$G^-_0$};
               \node (x) at (3.8,1.8) {$G_0$};
               \node (x) at (3.8,0.6) {$G_0$};
               
        \end{pgfonlayer}

    \end{tikzpicture}
    \caption{The regions $G_0,G^-_0, B_0,B^-_0$ for $u_2$.} \lb{F.4.8}
\end{figure}

\begin{lemma} \lb{L.4.3}
If $\alpha\in(0,\frac 14]$ and we let 
\begin{align*}
G_0 & := (1,\infty)\times(-1,1) \\
G^-_0 & := \{ y\in \bbR^2 \,|\, y_1<-2 \,\,\&\,\, y_2\in(-1,\min\{-y_1-2,1\}) \}, \\
B_0 & :=\{ y\in \bbR^2 \,|\, y_1>0 \,\,\&\,\, y_2\in(y_1 ,y_1+2) \}, \\
B^-_0 & :=\{ y\in \bbR^2 \,|\, y_1<-2 \,\,\&\,\, y_2\in(\max\{-y_1-2,1\} ,-y_1) \},
\end{align*}
then
\beq \lb{4.40}
 \int_{G_0 \cup G^-_0}  \frac{|x_1|}{|x|^{2+2\alpha}} dx - \int_{B_0 \cup B^-_0}  \frac{|x_1|}{|x|^{2+2\alpha}} dx >0 .
\eeq
\end{lemma}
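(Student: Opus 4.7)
The plan is to decompose the left-hand side of \eqref{4.40} as
\[
\biggl(\int_{G_0} - \int_{B_0}\biggr) + \biggl(\int_{G^-_0} - \int_{B^-_0}\biggr)
\]
(with each integral taken against $|x_1|/|x|^{2+2\alpha}\,dx$) and show that both brackets are positive.

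For the second bracket, I plan to exploit that the kernel is even in $x_1$, reflecting $G^-_0$ and $B^-_0$ via $x_1 \mapsto -x_1$ into the right half-plane to produce
\[
G^*_0 := \{x_1 > 2, \, x_2 \in (-1, \min\{x_1-2, 1\})\}, \quad B^*_0 := \{x_1 > 2, \, x_2 \in (\max\{x_1-2, 1\}, x_1)\},
\]
with $\int_{G^-_0} = \int_{G^*_0}$ and $\int_{B^-_0} = \int_{B^*_0}$.  For each fixed $x_1 > 2$, the two $x_2$-slices both have Lebesgue measure $\min\{x_1-1, 2\}$, but the slice of $G^*_0$ lies entirely in $\{|x_2| \le 1\}$ while that of $B^*_0$ lies in $\{|x_2| \ge 1\}$.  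Since $x_1/(x_1^2 + x_2^2)^{1+\alpha}$ is strictly decreasing in $|x_2|$ for fixed $x_1$, the $G^*_0$ slice integral dominates the $B^*_0$ slice integral, and integrating in $x_1$ yields $\int_{G^-_0} > \int_{B^-_0}$.

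For the first bracket, I plan to switch to polar coordinates $x = (r\cos\theta, r\sin\theta)$, so that the integrand together with the area element becomes $|\cos\theta|\, r^{-2\alpha}\, dr\, d\theta$.  Direct parametrization yields $G_0 \leftrightarrow \{\theta \in (-\pi/4, \pi/4), r \in (\sec\theta, 1/|\sin\theta|)\}$ and $B_0 \leftrightarrow \{\theta \in (\pi/4, \pi/2), r \in (0, 2/(\sin\theta - \cos\theta))\}$.  Performing the inner $r$-integrals, using $\int_0^{\pi/4} \cos\theta \sin^{2\alpha-1}\theta\, d\theta = 2^{-\alpha}/(2\alpha)$ (via $u = \sin\theta$), and applying the shift $\phi = \theta - \pi/4$ on the $B_0$ side (so that $\sin\theta - \cos\theta = \sqrt{2}\sin\phi$ and $\cos\theta = (\cos\phi - \sin\phi)/\sqrt{2}$) should produce the closed forms
\[
\int_{G_0} = \frac{2}{1-2\alpha}\biggl[\frac{2^{-\alpha}}{2\alpha} - \int_0^{\pi/4}\! \cos^{2\alpha}\theta\, d\theta\biggr], \quad \int_{B_0} = \frac{2^{-\alpha}}{1-2\alpha}\biggl[\frac{2^{-\alpha}}{2\alpha} - \int_0^{\pi/4}\! \sin^{2\alpha}\phi\, d\phi\biggr].
\]
Combining these and using the crude bounds $\cos^{2\alpha}\theta \le 1$ and $\sin^{2\alpha}\phi \ge 0$ on $(0, \pi/4)$, the first bracket's positivity reduces to showing $f(\alpha) := 2^{-\alpha}(2-2^{-\alpha})/(2\alpha) > \pi/2$ for all $\alpha \in (0, 1/4]$.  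Since the numerator $g(\alpha) := 2\cdot 2^{-\alpha} - 2^{-2\alpha}$ satisfies $g'(\alpha) = -2\ln 2 \cdot 2^{-\alpha}(1 - 2^{-\alpha}) < 0$, the function $f$ is strictly decreasing on $(0, 1/2)$, and its minimum on $(0, 1/4]$ is attained at $\alpha = 1/4$ with value $f(1/4) = 2g(1/4) = 2^{7/4} - \sqrt{2} \approx 1.949 > \pi/2 \approx 1.571$.

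The main obstacle I anticipate is carrying out the polar computations cleanly and verifying the final numerical comparison with rigorous bounds on $\pi$ and $2^{7/4}$.  The $\approx 0.38$ margin at $\alpha = 1/4$ is comfortable, which confirms both that the bound $\int_0^{\pi/4} \cos^{2\alpha}\theta\,d\theta \le \pi/4$ is wasteful only by a controlled amount and that --- unlike in Lemma \ref{L.4.2} --- the restriction $\alpha \le 1/4$ is not sharp for the present lemma.
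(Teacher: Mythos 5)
Your proof is correct, and it takes a genuinely different route from the paper. The paper reflects $G^-_0$ and $B^-_0$ across the $x_2$-axis, evaluates all four resulting integrals exactly via the antiderivative $f(s)=\int_0^s(q^2+1)^{-\alpha}\,dq$ and the constant $\mu_\alpha$ from \eqref{4.10a}, combines everything into the single closed form $I=(2-2^{1-2\alpha})f(1)+2^{2-2\alpha}f(1/2)-\tfrac{2^{1-2\alpha}(1-2^{-\alpha})}{1-2\alpha}$, and then verifies positivity by a chain of elementary inequalities. You instead split the claim into two brackets and handle them by separate arguments. Your treatment of the second bracket is particularly elegant and more structural than what is in the paper: after the $x_1\mapsto -x_1$ reflection, the two vertical slices at each $x_1>2$ have equal Lebesgue measure, the $G^*_0$-slice sits in $\{|x_2|\le1\}$ and the $B^*_0$-slice in $\{|x_2|\ge1\}$, and the strict monotonicity of $x_1/(x_1^2+x_2^2)^{1+\alpha}$ in $|x_2|$ finishes it --- this is valid for all $\alpha>0$ with no further computation. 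For the first bracket your polar reduction to $f(\alpha)=2^{-\alpha}(2-2^{-\alpha})/(2\alpha)>\pi/2$ via the crude bounds $\cos^{2\alpha}\theta\le1$, $\sin^{2\alpha}\phi\ge0$ is correct (the monotonicity of $f$ on $(0,\tfrac12)$ follows since $g(\alpha)=2\cdot2^{-\alpha}-2^{-2\alpha}$ is positive decreasing and $1/(2\alpha)$ is positive decreasing), and the numerical margin $2^{7/4}-\sqrt2\approx1.95>\pi/2\approx1.57$ at $\alpha=\tfrac14$ is comfortable. What the paper's approach buys is a single unified exact formula reusable from the Lemma \ref{L.4.2} machinery; what yours buys is conceptual transparency (especially the slice-comparison), independence from $\mu_\alpha$ and the $f$-bookkeeping, and the observation --- which is correct and which the paper does not make explicit --- that, unlike in Lemma \ref{L.4.2} where the constant turns negative near $\alpha\approx0.257$, the restriction $\alpha\le\tfrac14$ is not at all tight for this lemma.
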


So \eqref{4.3} and \eqref{4.31} hold whenever $\eps'>0$ is small enough, which concludes the proof of Theorem \ref{T.1.1}(ii) as explained before \eqref{4.3}.  

\bigskip\noindent
{\bf Proofs of the two velocity estimates.}  It remains to prove Lemmas \ref{L.4.2} and \ref{L.4.3}.

\begin{proof}[Proof of Lemma \ref{L.4.2}]
Let $I(b)$ be $2\alpha$ times the value inside the infimum in \eqref{4.7} when $G_b^-$ is replaced by $G_b^*:=G_b^-\cap \{x\in\bbR^2\,|\, x_1<-x_2\}$, which turns out to simplify the proof significantly.  This does not change the integration  region when $b=1$, while for $b<1$ it decreases the value inside the infimum (after being multiplied by $2\alpha$, which will also be convenient).  
Note that since $I$ is  continuous on $[0,1]$,  it suffices to consider $b\in(0,1]$.

If now $G_b^+,B_b^+$ are the reflections of 
$G_b^*,B_b^-$ across the $x_1$ axis, then $G_b,G_b^+,B_b^+\subseteq \bbR\times\bbR^+$ and
\[
I(b)=  2\alpha \int_{G_b\cup G_b^+} \frac{x_2}{|x|^{2+2\alpha}} dx - 2\alpha  \int_{B_b^+} \frac{x_2}{|x|^{2+2\alpha}} dx.
\]
  When $0\le a\le c$,  then direct integration gives
\beq \lb{4.8}
2\alpha \int_a^c \frac{x_2}{|(x_1,x_2)|^{2+2\alpha}} dx_2 = \frac 1{(x_1^2+a^2)^{\alpha}} - \frac 1{(x_1^2+c^2)^{\alpha}},
\eeq
and for $0\le r\le s$ we also have
\beq \lb{4.9}
\int_{r}^{s} \frac {dx_1} {(x_1^2+a^2)^{\alpha}}  = 
\begin{cases}
a^{1-2\alpha} \left( f \left( \frac sa \right) - f \left( \frac ra \right) \right) & a>0, \\
\frac { 1 } {1-2\alpha} \left( s^{1-2\alpha} - r^{1-2\alpha} \right) & a=0,
\end{cases}
\eeq
where
\beq \lb{4.10}
f(s):=\int_0^s \frac {dq}{(q^2+1)^{\alpha}}.
\eeq
We will use \eqref{4.8} and \eqref{4.9} repeatedly below, without saying so explicitly.
We also note that from $0\le (q^2)^{-\alpha} - (q^2+1)^{-\alpha} \le \alpha q^{-2-2\alpha}$ we find that there exists
\beq \lb{4.10a}
\mu_\alpha:= \lim_{s\to\infty} \left( \frac{s^{1-2\alpha}}{1-2\alpha}-f(s) \right)>0,
\eeq
which then satisfies
\beq \lb{4.10c}
\mu_\alpha \le \frac 1{1-2\alpha}-f(1)+\int_1^\infty \frac \alpha {q^{2+2\alpha}}dq \le \frac 1{1-2\alpha}-f(1)+ \frac \alpha{1+2\alpha}.
\eeq
as well as
\beq \lb{4.10b}
\lim_{s\to\infty} \left( \gamma^{2\alpha-1} f(\gamma s) - \frac{s^{1-2\alpha}}{1-2\alpha} \right) =- \gamma^{2\alpha-1}\mu_\alpha
\eeq
 for any $\gamma>0$.

First, we easily get
\beq \lb{4.11}
2\alpha  \int_{B_b^+} \frac{x_2}{|x|^{2+2\alpha}} dx = 2\int_{0}^1 \left( \frac 1{x_1^{2\alpha}} - \frac 1{(x_1^2+b^2)^{\alpha}} \right) dx_1
= \frac 2{1-2\alpha} - 2b^{1-2\alpha} f\left(\frac 1b \right).
\eeq
Next,  $x_1^2+(x_1-2)^2= 2((x_1-1)^2+1)$ and \eqref{4.10b} yield
\beq \lb{4.12}
\begin{split}
2\alpha  \int_{G_b} \frac{x_2}{|x|^{2+2\alpha}} dx & = 
\int_0^2  \left( \frac 1 {x_1^{2\alpha}} - \frac{2^{-\alpha}} {x_1^{2\alpha}} \right) dx_1
+ \int_2^\infty  \left( \frac {2^{-\alpha}} {((x_1-1)^2+1)^{\alpha}} - \frac{2^{-\alpha}} {x_1^{2\alpha}} \right) dx_1 \\
& = \frac{2^{1-2\alpha} } {1-2\alpha} - 2^{-\alpha} f(1) - 2^{-\alpha} \mu_\alpha
\end{split}
\eeq
(here we also used that $\lim_{s\to\infty} f'(s)=0$).
Finally,  $x_1^2+(x_1+2b)^2= 2((x_1+b)^2+b^2)$ 
yields
\[
\begin{split}
2\alpha  \int_{G_b^+} \frac{x_2}{|x|^{2+2\alpha}} dx  & =  
\int_{-b}^{b}   \left(  \frac 1{(x_1^2+b^2)^{\alpha}}  - \frac{2^{-\alpha}} {((x_1+b)^2+b^2)^{\alpha}} \right) dx_1 
 + \int_{b}^\infty  \left( \frac {2^{-\alpha}}{x_1^{2\alpha}}  -\frac{2^{-\alpha}} {((x_1+b)^2+b^2)^{\alpha}}  \right) dx_1 \\
 & =  2b^{1-2\alpha} f(1)  - \frac{ 2^{-\alpha} b^{1-2\alpha}} {1-2\alpha} 
 + 2^{-\alpha}b^{1-2\alpha} \mu_\alpha,
\end{split}
\]
where we also used \eqref{4.10b}.
Hence we see that
\begin{align*}
I(b)=   \frac {2^{1-2\alpha}-2}{1-2\alpha} -2^{-\alpha} (f(1) +  \mu_\alpha) +
2b^{1-2\alpha} \left( f\left(\frac 1b \right) +  f(1) - \frac{ 2^{-1-\alpha} } {1-2\alpha} + 2^{-1-\alpha} \mu_\alpha \right) 
\end{align*}
for all $b\in(0,1]$, and so also
\beq \lb{4.16}
I(1)= (4-2^{-\alpha}) f(1) -\frac {2+2^{-\alpha}-2^{1-2\alpha}}{1-2\alpha} .
\eeq

If we let $J(c):= \frac 1{2}I(\frac 1c)$ for $c\in[1,\infty)$, then
\[
J'(c)= (1-2\alpha)c^{2\alpha-2} \left(  \frac c{(1-2\alpha)(c^2+1)^\alpha} -f(c) - f(1) + \frac{ 2^{-1-\alpha} } {1-2\alpha}  - 2^{-1-\alpha} \mu_\alpha \right) =: (1-2\alpha)c^{2\alpha-2} g(c).
\]
Since
\[
g'(c) = \frac 1{(1-2\alpha)(c^2+1)^\alpha} \left(  1- \frac {2\alpha c^2}{c^2+1}-(1-2\alpha) \right)=  \frac {2\alpha}{(1-2\alpha)(c^2+1)^{1+\alpha}}\ge 0,
\]
we see that
\beq \lb{4.50}
\min_{c\in[1,\infty)} g(c)=g(1)= \frac {3\cdot 2^{-1-\alpha}}{1-2\alpha} - 2 f(1) - 2^{-1-\alpha} \mu_\alpha.  
\eeq
When $g(1)\ge 0$ (which happens for $\alpha$ not too small), then $J$ attains its minimum at $c=1$, so we also have $\min_{b\in(0,1]} I(b)=I(1)$.  If instead $g(1)<0$, then
\[
\inf_{c\in[1,\infty)} J(c)\ge J(1)+\int_1^\infty (1-2\alpha) c^{2\alpha-2} g(1) dc = J(1)+g(1),
\]
and so $\inf_{b\in(0,1]} I(b)\ge I(1)+2 g(1)$.  This means that
\[
\inf_{b\in(0,1]} I(b)\ge \min\{I(1),I(1)+2 g(1)\}.
\]

So it remains to estimate these two values.  From \eqref{4.16}, \eqref{4.50}, and \eqref{4.10c} we obtain
\[
2^\alpha(I(1)+2g(1)) = \frac {2+2^{1-\alpha} -2^{1+\alpha} }{1-2\alpha} - \mu_\alpha - f(1)  \ge \frac {1+2^{1-\alpha} -2^{1+\alpha} }{1-2\alpha}- \frac \alpha{1+2\alpha}.
\]
Since $\frac \alpha{1+2\alpha}\le \frac 16$ and $1+2^{1-\alpha} -2^{1+\alpha}\ge \frac 3{10}$ when $\alpha\in[0,\frac 14]$, it follows that
\[
I(1)+2g(1) \ge \frac {2^{3/4}}{15} \ge \frac 1{10}.
\] 
Hence it suffices to show that
$I(1)\ge \frac 1{20}$, which thanks to   \eqref{4.16} will follow from
\[
 f(1) \ge \frac {2+2^{-\alpha}-2^{1-2\alpha}}{(1-2\alpha)(4-2^{-\alpha})} +\frac 1{60}.
\]
  Since the numerator and denominator of the first fraction on the right-hand side are increasing and decreasing in $\alpha$, respectively, and $f(1)$ is decreasing in $\alpha$, it suffices to prove this for $\alpha=\frac 14$, when the inequality becomes
\beq \lb{4.17}
f(1)\ge \frac{4+2^{3/4}-2^{3/2} }{4-2^{-1/4}} + \frac 1{60}.
\eeq
The first fraction on the right is less than 0.9033 and $f(1)$ for $\alpha=\frac 14$ can be numerically bounded below by 0.9374, which yields \eqref{4.17}.  Alternatively, since $(q^2+1)^{-1/4}$ is concave on $[0,\sqrt{2/3}]\ni \frac 45$, and we have $((\frac 45)^2+1)^{-1/4}\ge \frac{22}{25}$ and $2^{-1/4}\ge \frac{21}{25}$, it follows that 
\[
f(1)\ge \frac 25 \left(1+ \frac{22}{25}\right)+\frac 15\, \frac{21}{25} = 0.92,
\]
and we conclude by $0.92\ge 0.9033 + \frac 1{60}$.
%
%
\end{proof}

\begin{proof}[Proof of Lemma \ref{L.4.3}]
If $G^+_0,B^+_0$ are reflections of $G^-_0,B^-_0$ across the $x_2$ axis, then the left-hand side of \eqref{4.40} times $2\alpha$ is
\[
I:=2\alpha \int_{G_0\cup G^+_0} \frac{x_1}{|x|^{2+2\alpha}} dx - 2\alpha  \int_{ B_0 \cup B^+_0} \frac{x_1}{|x|^{2+2\alpha}} dx
\]
and all 4 sets are contained in $\bbR^+\times\bbR$.
When $0\le a\le c$,  then direct integration gives
\beq \lb{4.41}
2\alpha \int_a^c \frac{x_1}{|(x_1,x_2)|^{2+2\alpha}} dx_1 = \frac 1{(x_2^2+a^2)^{\alpha}} - \frac 1{(x_2^2+c^2)^{\alpha}},
\eeq
and with $f$ from \eqref{4.10} we again  have
\beq \lb{4.42}
\int_{r}^{s} \frac {dx_2} {(x_2^2+a^2)^{\alpha}}  = 
\begin{cases}
a^{1-2\alpha} \left( f \left( \frac sa \right) - f \left( \frac ra \right) \right) & a>0, \\
\frac { 1 } {1-2\alpha} \left( s^{1-2\alpha} - r^{1-2\alpha} \right) & a=0
\end{cases}
\eeq
(we will again use these repeatedly below).

We now see that 
\[
2\alpha  \int_{B_0} \frac{x_1}{|x|^{2+2\alpha}} dx  = \frac{2^{1-2\alpha} } {1-2\alpha} - 2^{-\alpha} f(1) - 2^{-\alpha} \mu_\alpha
\]
because this is the same integral as for the region $G_b$ in the previous proof.  Next, 
\begin{align*}
2\alpha  \int_{B_0^+} \frac{x_1}{|x|^{2+2\alpha}} dx  & = 
 \int_1^2  \left( \frac{1}{(x_2^2+2^2)^\alpha} -  \frac{2^{-\alpha}} {((x_2+1)^2+1)^\alpha} \right) dx_2
 + \int_2^\infty  \left( \frac{2^{-\alpha}} {x_2^{2\alpha}} -  \frac{2^{-\alpha}} {((x_2+1)^2+1)^\alpha} \right) dx_2 \\
 &= 2^{1-2\alpha}  \left( f(1)-f \left( \frac 12 \right) \right) + 2^{-\alpha} f(2) - \frac {2^{1-3\alpha}}{1-2\alpha} + 2^{-\alpha} \mu_\alpha
\end{align*}
and
\[
2\alpha  \int_{G_0} \frac{x_1}{|x|^{2+2\alpha}} dx  = 2 \int_0^1 \frac{dx_2}{(x_2^2+1)^\alpha} = 2f(1).
\]
Finally, from $x_2^2+(x_2+2)^2=2((x_2+1)^2+1)^2$ we obtain 
\[
2\alpha  \int_{G_0^+} \frac{x_1}{|x|^{2+2\alpha}} dx  = 
 \int_0^1 \frac{dx_2}{(x_2^2+2^2)^\alpha} +  \int_0^1 \frac{2^{-\alpha}} {((x_2+1)^2+1)^\alpha} dx_2
 = 2^{1-2\alpha} f \left( \frac 12 \right) + 2^{-\alpha} (f(2)-f(1)).
\]
Hence
\[
I =(2-2^{1-2\alpha}) f(1) +2^{2-2\alpha} f \left( \frac 12 \right) - \frac {2^{1-2\alpha}(1-2^{-\alpha})} {1-2\alpha} .
\]
Since
\[
2^{2-2\alpha} f \left( \frac 12 \right) > 2^{3/2} \frac 12 \left( \frac 45 \right)^{1/4} > 1 > 4 (1-2^{-1/4}) > \frac {2^{1-2\alpha}(1-2^{-\alpha})} {1-2\alpha}
\]
for $\alpha\in(0,\frac 14]$, \eqref{4.40} follows.
\end{proof}



\end{document}